\documentclass[12pt]{amsart}
\usepackage{amscd,amsmath,amssymb,amsthm,amsfonts,epsfig,graphics}
\usepackage{geometry}
\usepackage{graphicx}
\usepackage{mathrsfs,amssymb}
\usepackage{bm}
\setcounter{MaxMatrixCols}{30}

\parskip=4pt

\theoremstyle{plain}

\newtheorem{lemma}{Lemma}

\newtheorem{proposition}{Proposition}
\newtheorem{remark}{Remark}

\newtheorem{theorem}{Theorem}
\numberwithin{equation}{section}

\begin{document}
\title[IMPROVED CRITICAL EIGENFUNCTION RESTRICTION ESTIMATES IN 2-D]{IMPROVED CRITICAL EIGENFUNCTION RESTRICTION ESTIMATES ON RIEMANNIAN SURFACES WITH NONPOSITIVE CURVATURE}
\author{Yakun Xi}
\author{Cheng Zhang}
\address{Department of Mathematics\\
Johns Hopkins University\\
Baltimore, MD 21218, USA}
\email{ykxi@math.jhu.edu,
czhang67@math.jhu.edu}
\date{}
\subjclass[2010]{Primary 58J51; Secondary 35A99, 42B37.} 

\begin{abstract}We show that one can obtain improved $L^4$ geodesic restriction estimates for eigenfunctions on compact Riemannian surfaces with nonpositive curvature. We achieve this by adapting Sogge's strategy in \cite{loglog}. We first combine the improved $L^2$ restriction estimate of Blair and Sogge \cite{top} and the classical improved $L^\infty$ estimate of B\'erard to obtain an improved weak-type $L^4$ restriction estimate. We then upgrade this weak estimate to a strong one by using the improved Lorentz space estimate of Bak and Seeger \cite{seeger}.  This estimate improves the $L^4$ restriction estimate of Burq, G\'erard and Tzvetkov \cite{burq} and Hu \cite{hu} by a power of $(\log\log\lambda)^{-1}$. Moreover, in the case of compact hyperbolic surfaces, we obtain further improvements in terms of $(\log\lambda)^{-1}$ by applying the ideas from \cite{chensogge} and \cite{top}.  We are able to compute various constants that appeared in \cite{chensogge} explicitly, by proving detailed oscillatory integral estimates and lifting calculations to the universal cover $\mathbb H^2$.
\end{abstract}

\maketitle

\section{Introduction}
Let $(M,g)$ be a compact $n$-dimensional Riemannian manifold and let $\Delta_g$ be the associated Laplace-Beltrami operator. Let $e_\lambda$ denote the $L^2$-normalized eigenfunction 
\[-\Delta_g e_\lambda=\lambda^2e_\lambda,\]
so that $\lambda$ is the eigenvalue of the first order operator $\sqrt{-\Delta_g}.$

Various types of concentrations exhibited by eigenfunctions have been studied. See the recent survey by Sogge \cite{survey} for a detailed discussion.  A classical result of Sogge \cite{Slp} states that the $L^p$ norm of the eigenfunctions satisfies
\[\|e_\lambda\|_{L^p{(M)}}\le C\lambda^{\mu{(p)}},\]
where $2\le p\le\infty$ and $\mu(p)$ is given by
\[\mu(p)=\max\bigg\{\frac{n-1}{2}\bigg(\frac{1}{2}-\frac{1}{p}\bigg),\ n\bigg(\frac{1}{2}-\frac{1}{p}\bigg)-\frac{1}{2}\bigg\}.\]
If we let $p_c=\frac{2(n+1)}{n-1}$, these bounds can also be written as
\begin{eqnarray}\label{slp}\|e_\lambda\|_{L^p{(M)}}\le
\begin{cases}
C\lambda^{\frac{n-1}{2}(\frac{1}{2}-\frac{1}{p})}, &2\le p\le p_c,
\cr C\lambda^{n(\frac{1}{2}-\frac{1}{p})-\frac{1}{2}}, &p_c\le p\le\infty. \end{cases}
\end{eqnarray}
The estimates \eqref{slp} are saturated on the round sphere by zonal functions for $p\ge p_c$ and for $2<p\le p_c$ by the highest weight spherical harmonics. Even though they are sharp on the round sphere $S^n$, it is expected that \eqref{slp} can be improved for generic Riemannian manifolds. Manifolds with nonpositive sectional curvature have been studied as the model case for such improvements.

It is well-known that one can get log improvements for $\|e_\lambda\|_{L^\infty(M)}$ if $M$ has nonpositive  curvature. Indeed, B\'erard's results \cite{berard} in 1977 on improved error term estimates for the Weyl formula imply that
\[\|e_\lambda\|_{L^\infty(M)}=O(\lambda^\frac{n-1}2/\sqrt{\log\lambda}),\]
which gives log improvements over \eqref{slp} for $p>p_c$ via interpolation. Recently, Blair and Sogge \cite{top} were able to obtain log improvements over \eqref{slp} for $2<p<p_c$ by proving improved Kakeya-Nikodym bounds which measure $L^2$-concentration of eigenfunctions on $\lambda^{-\frac12}$ tubes about unit length geodesics. Despite the success in improving \eqref{slp} for the range $2<p<p_c$ and $p_c<p<\infty$, improvement of the critical case has been elusive. On one hand, the estimate at the critical exponent $p_c=\frac{2(n+1)}{n-1}$
\begin{equation}\label{pc}
\|e_\lambda\|_{L^\frac{2(n+1)}{n-1}(M)}\le C\lambda^\frac{n-1}{2(n+1)}
\end{equation}
actually implies \eqref{slp} for all $2\le p\le\infty$ via interpolating with the classical $L^\infty$ estimate and the trivial $L^2$ estimate. On the other hand, this bound \eqref{pc} is sensitive to both point concentration and concentration along  geodesics, in the sense that it is saturated by both zonal functions and spherical harmonics on the round sphere. 

Recently, Sogge \cite{loglog} managed to improve over \eqref{pc} by a power of $(\log\log\lambda)^{-1}$ under the assumption of nonpositive curvature. Using Bourgain's \cite{bourgain} idea in proving weak-type estimate for the Stein-Tomas restriction theorem, Sogge was able to combine the recent improved $L^p,$ $2<p<p_c$ bounds of Blair and Sogge \cite{top} and the classical improved sup-norm estimate of B\'erard \cite{berard}, to get improved bounds for the critical case.

In the last decade, similar $L^p$ estimates have been established for the restriction of eigenfunctions to geodesics. Burq, G\'erard and Tzvetkov \cite{burq} and Hu \cite{hu} showed that for $n$-dimensional Riemannian manifold $(M,g)$, if $\it\Pi$ denotes the space of all unit-length geodesics $\gamma$, then 

\begin{equation}\label{rest}\sup\limits_{\gamma\in\it\Pi}\Big(\int_\gamma|e_\lambda|^p\,ds\Big)^\frac1p\le C\lambda^{\sigma(n,p)}\|e_\lambda\|_{L^2(M)},
\end{equation}
where
\begin{eqnarray}\sigma(2,p)=
\begin{cases}
\frac14, &2\le p\le 4,
\cr \frac12-\frac1p, &4\le p\le\infty. \end{cases}
\end{eqnarray}
and
\begin{equation}\sigma(n,p)=\frac{n-1}2-\frac1p,\ {\rm if}\ {p}\ge2\ {\rm and}\ n\ge3,
\end{equation}
here the case $n=3,\ p=2$ is due to Chen and Sogge \cite{chensogge}. Note that in the 2-dimensional case, the estimates \eqref{rest} have a similar flavor compared to Sogge's $L^p$ estimates \eqref{slp}. Indeed, when $n=2$ the estimates \eqref{rest} also have a critical exponent $p_c=4$. Moreover, on the sphere $S^2$, \eqref{rest} is saturated by zonal functions when $p\le4$, while for $p\ge4$, it is saturated by the highest weight spherical harmonics. When $n=3$, the critical exponent no longer appears in \eqref{rest}. However, the estimate for  $p=2$ is still saturated by both zonal functions and highest weight spherical harmonics. In higher dimensions $n>3$, geodesic restriction estimates are too singular to detect concentrations of eigenfunctions near geodesics. In fact, in these dimensions, estimates \eqref{rest} are always saturated by zonal functions rather than highest weight spherical harmonics on the round sphere $S^n$. 

There has been considerable work towards improving \eqref{rest} under the assumption of nonpositive curvature in the 2-dimensional case. B\'erard's sup-norm estimate \cite{berard} provides natural improvements for large $p$. In \cite{chen}, Chen managed to improve over \eqref{rest} for all $p>4$ by a $(\log\lambda)^{-\frac12}$ factor:
\begin{equation}
\sup\limits_{\gamma\in\it\Pi}\Big(\int_\gamma|e_\lambda|^p\,ds\Big)^\frac1p\le C\frac{\lambda^{\frac12-\frac1p}}{(\log\lambda)^\frac12}\|e_\lambda\|_{L^2(M)}.
\end{equation}
Sogge and Zelditch \cite{sz} showed that one can improve \eqref{rest} for $2\le p<4$, in the sense that
\begin{equation}
\sup\limits_{\gamma\in\it\Pi}\Big(\int_\gamma|e_\lambda|^p\,ds\Big)^\frac1p=o(\lambda^\frac14).
\end{equation}
A few years later, Chen and Sogge \cite{chensogge} showed that the same conclusion can be drawn for $p=4$:
\begin{equation}\label{chensogge}
\sup\limits_{\gamma\in\it\Pi}\Big(\int_\gamma|e_\lambda|^4\,ds\Big)^\frac14=o(\lambda^\frac14).
\end{equation}
\eqref{chensogge} is the first result to improve an estimate that is saturated both by zonal functions and highest weight spherical harmonics.
Recently, by using the Toponogov's comparison theorem, Blair and Sogge \cite{top} showed that it is possible to get log improvements for $L^2$-restriction:
\begin{equation}\label{top1}
\sup\limits_{\gamma\in\it\Pi}\Big(\int_\gamma|e_\lambda|^2\,ds\Big)^\frac12\le C\frac{\lambda^\frac14}{(\log\lambda)^\frac14}\|e_\lambda\|_{L^2(M)},
\end{equation}

Adapting Sogge's idea in proving improved critical $L^{p_c}$ estimates \cite{loglog}, we are able to further improve the critical $L^4$-restriction estimate in the 2-dimensional case \eqref{rest} by a factor of $(\log\log\lambda)^{-\frac18}.$
\begin{theorem}\label{mainthm}
Let $(M,g)$ be a 2-dimensional compact Riemannian manifold of nonpositive curvature, let $\gamma\subset M$ be a fixed unit-length geodesic segment. Then for $\lambda\gg 1$, there is a constant $C$ such that
\begin{equation}\label{main}
\|\chi_{[\lambda,\lambda+(\log\lambda)^{-1}]}f\|_{L^4(\gamma)}\le C\lambda^\frac14(\log\log\lambda)^{-\frac18}\|f\|_{L^2(M)}.\end{equation}
Therefore, taking $f=e_\lambda$, we have
\begin{equation}\|e_\lambda\|_{L^4(\gamma)}\le C\lambda^\frac14(\log\log\lambda)^{-\frac18}\|e_\lambda\|_{L^2(M)}.\end{equation}
Moreover, if $\it\Pi$ denotes the set of unit-length geodesics, there exists a uniform constant $C=C(M,g)$ such that
\begin{equation}\label{uniform}\sup\limits_{\gamma\in\it\Pi}\Big(\int_\gamma|e_\lambda|^4\,ds\Big)^\frac14\le C\lambda^\frac14(\log\log\lambda)^{-\frac18}\|e_\lambda\|_{L^2(M)}.
\end{equation}
\end{theorem}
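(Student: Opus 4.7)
The plan is to transplant Sogge's $\log\log$-type strategy from \cite{loglog}, developed there for the critical $L^{p_c}$ bound on $M$, to the present setting of $L^4$ restriction to a geodesic. Let $\chi_\lambda = \chi_{[\lambda,\lambda+(\log\lambda)^{-1}]}$ denote the spectral projector onto the narrow window of width $(\log\lambda)^{-1}$; the theorem as stated is really an estimate for this operator from $L^2(M)$ to $L^4(\gamma)$. The two essential inputs I will need are the Blair--Sogge improved $L^2$ restriction bound \eqref{top1}, upgraded from a single eigenfunction to the projector $\chi_\lambda$, and Bérard's improved sup-norm bound \cite{berard}, which takes the form $\|\chi_\lambda f\|_{L^\infty(M)} \lesssim \lambda^{1/2}(\log\lambda)^{-1/2}\|f\|_{L^2(M)}$ on manifolds of nonpositive curvature. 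Both are obtained by the standard $T^*T$ argument on an Ehrenfest-type time interval of length $c\log\lambda$, writing the wave kernel via the Hadamard parametrix lifted to the universal cover and exploiting the exponential volume growth bound available in nonpositive curvature.

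The first substantive step is to combine these two inputs through Bourgain's pigeonhole/weak-type argument in the spirit of \cite{loglog}. Given $f$ with $\|f\|_{L^2(M)} = 1$ and a threshold $\alpha > 0$, I estimate $|\{x\in\gamma:|\chi_\lambda f(x)|>\alpha\}|$ by pairing the Chebyshev bound coming from the improved $L^2(\gamma)$ estimate against the pointwise $L^\infty$ cap on the amplitude, and then balancing over the admissible dyadic range of $\alpha$. The output is an improved weak-type $L^4$ restriction estimate of the form $\|\chi_\lambda f\|_{L^{4,\infty}(\gamma)} \lesssim \lambda^{1/4}(\log\lambda)^{-\delta}\|f\|_{L^2(M)}$ for a fixed $\delta > 0$.

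I then upgrade this weak-type bound to the strong-type bound \eqref{main} by invoking the Bak--Seeger Lorentz-space improvement \cite{seeger}. The relevant feature is that the conversion from $L^{4,\infty}$ to $L^4$, combined with the $L^\infty$ bound that caps the range of levels $\alpha$ one must sum over, incurs only a factor proportional to $(\log(B/A))^{1/4}$, where $A$ and $B$ are the weak-$L^4$ and $L^\infty$ constants produced above. In our setting this translates the $(\log\lambda)^{-\delta}$ weak-type gain into the $(\log\log\lambda)^{-1/8}$ strong-type gain appearing in \eqref{main}. Once \eqref{main} is in hand, the bound for $e_\lambda$ is immediate by taking $f = e_\lambda$, and \eqref{uniform} follows from compactness of the space of unit-length geodesics in $M$ together with the fact that every constant in the argument depends only on curvature bounds, injectivity radius, and the local geometry in a tubular neighborhood of $\gamma$.

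The hardest part will be the first step: producing the correct quantitative form of the improved $L^2$ restriction estimate for the narrowed window $\chi_\lambda$ (rather than for a single eigenfunction), and combining it with Bérard's estimate so that Bourgain's weak-type balancing produces a genuine $(\log\lambda)^{-\delta}$ gain rather than losing the improvement. A secondary difficulty is that the Bak--Seeger upgrade requires the weak-type inequality for the full operator $\chi_\lambda$ on $L^2(M)$, not merely on individual eigenfunctions, which is precisely why the theorem is phrased for the spectral projector on the window $[\lambda,\lambda+(\log\lambda)^{-1}]$ throughout rather than for isolated $e_\lambda$.
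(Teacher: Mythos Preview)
Your overall architecture---produce an improved weak-type $L^{4,\infty}(\gamma)$ bound by a Bourgain-type argument and then upgrade to strong $L^4$ via Bak--Seeger---matches the paper.  However, the quantitative content of both steps is misidentified, and as written the argument does not close.

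First, the weak-type step.  You claim that combining the Blair--Sogge $L^2(\gamma)$ bound with B\'erard's $L^\infty$ bound via ``Chebyshev against the pointwise $L^\infty$ cap'' yields $\|\chi_\lambda f\|_{L^{4,\infty}(\gamma)}\lesssim\lambda^{1/4}(\log\lambda)^{-\delta}$.  This is too optimistic and the mechanism is not the one that works.  The actual argument (as in \cite{loglog}) is a $TT^*$ decomposition of the superlevel set into $r$-separated pieces; the \emph{diagonal} sum is controlled not by Chebyshev but by a new local restriction estimate $\|\rho(\lambda-P)f\|_{L^2(\gamma_r)}\lesssim \lambda^{1/4}r^{1/4}\|f\|_{L^2(M)}$, while the \emph{off-diagonal} sum uses B\'erard's pointwise \emph{kernel} bound
\[
|\eta(T(\lambda-P))(x,y)|\lesssim T^{-1}\Big(\tfrac{\lambda}{d_g(x,y)}\Big)^{1/2}+\lambda^{1/2}e^{CT}.
\]
The exponential error $\lambda^{1/2}e^{CT}$ forces the intermediate reproducing operator to live at time scale $T=c_0\log\log\lambda$, so that $e^{CT}=(\log\lambda)^{Cc_0}$ stays well below the threshold $\alpha^2\ge\lambda^{1/2}(\log\lambda)^{1/4}$.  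The output is therefore only
\[
\|\rho(\log\lambda(\lambda-P))\|_{L^2(M)\to L^{4,\infty}(\gamma)}=O\big(\lambda^{1/4}(\log\log\lambda)^{-1/4}\big),
\]
not a power of $(\log\lambda)^{-1}$.  Your description omits the local $L^2(\gamma_r)$ lemma entirely, and the naive ``Chebyshev versus $L^\infty$ cap'' balancing leaves an uncontrolled range $\lambda^{1/4}(\log\lambda)^{\varepsilon}\lesssim\alpha\ll\lambda^{1/2}(\log\lambda)^{-1/2}$.

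Second, the upgrade step.  The Bak--Seeger input is the Lorentz bound $\|\chi_{[\lambda,\lambda+1]}\|_{L^2(M)\to L^{4,2}(\gamma)}\lesssim\lambda^{1/4}$, and the upgrade is the pointwise interpolation
\[
\|u\|_{L^4(\gamma)}^4=\int_0^\infty [t^{1/4}u^*(t)]^4\,\tfrac{dt}{t}\le \big(\sup_t t^{1/4}u^*(t)\big)^{2}\,\|u\|_{L^{4,2}(\gamma)}^{2},
\]
which \emph{halves} the exponent of whatever gain sits in the $L^{4,\infty}$ bound: $(\log\log\lambda)^{-1/4}$ becomes $(\log\log\lambda)^{-1/8}$.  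It is not a level-set sum capped by the $L^\infty$ norm; that route costs a factor $(\log(B/A))^{1/4}\approx(\log\lambda)^{1/4}$, which would erase any $(\log\lambda)^{-\delta}$ gain with $\delta\le 1/4$.  Conversely, if a genuine $(\log\lambda)^{-\delta}$ weak bound were available, the Lorentz interpolation would give $(\log\lambda)^{-\delta/2}$, not $(\log\log\lambda)^{-1/8}$---so your numerology is inconsistent at both ends.
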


Furthermore, if we assume further that $M$ has constant negative curvature, we are able to get log improvement for the $L^4$-restriction estimate following the ideas in \cite{top} and \cite{chensogge}.
\begin{theorem}\label{thm2}
Let $(M,g)$ be a 2-dimensional compact Riemannian manifold of constant negative curvature, let $\gamma\subset M$ be a fixed unit-length geodesic segment. Then for $\lambda\gg 1$, there is a constant $C$ such that
\begin{equation}\|e_\lambda\|_{L^4(\gamma)}\le C\lambda^\frac14(\log\lambda)^{-\frac14}\|e_\lambda\|_{L^2(M)}.\end{equation}
Moreover, if $\it\Pi$ denotes the set of unit-length geodesics, there exists a uniform constant $C=C(M,g)$ such that
\begin{equation}\sup\limits_{\gamma\in\it\Pi}\Big(\int_\gamma|e_\lambda|^4\,ds\Big)^\frac14\le C\lambda^\frac14(\log\lambda)^{-\frac14}\|e_\lambda\|_{L^2(M)}.
\end{equation}
\end{theorem}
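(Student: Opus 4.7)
My strategy is to combine the bilinear/$TT^*$ framework of Chen-Sogge~\cite{chensogge} for the critical $L^4$-restriction with explicit calculations on the universal cover $\mathbb H^2$ in the spirit of Blair-Sogge~\cite{top}, exploiting the fact that constant negative curvature allows every geometric estimate in~\cite{chensogge} to be made quantitative. In particular, I would aim to prove an improved pointwise bound for the associated bilinear kernel, which both recovers~\eqref{top1} and upgrades it to the critical exponent $p=4$.

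First, by duality and $TT^*$, the inequality~\eqref{uniform} is equivalent to an operator-norm bound for $TT^*:L^{4/3}(\gamma)\to L^4(\gamma)$, where $T$ is the composition of a spectral reproducing operator of window width $(\log\lambda)^{-1}$ with restriction to $\gamma$. Its Schwartz kernel is, up to harmless errors,
\[
K_\lambda(s,t)=\rho(T(\lambda-\sqrt{-\Delta_g}))(\gamma(s),\gamma(t)),
\]
where $\rho$ is an even Schwartz function with $\widehat\rho$ supported in $[-1,1]$ and $T\approx c\log\lambda$ is an Ehrenfest-type time. Lifting through the Hadamard parametrix and unfolding over the deck transformation group $\Gamma$ on $\mathbb H^2$ gives
\[
K_\lambda(s,t)=\sum_{\alpha\in\Gamma}\widetilde K_\lambda\bigl(\tilde\gamma(s),\alpha\tilde\gamma(t)\bigr),
\]
with only $O(e^{cT})$ nontrivial terms by hyperbolic volume growth. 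The identity term reproduces the standard $\lambda^{1/2}|s-t|^{-1/2}$ bound with no improvement; this is the critical Chen-Sogge-type contribution against which any gain must be extracted.

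For $\alpha\neq\mathrm{id}$, stationary phase in $(s,t)$ applies with phase equal to the hyperbolic distance $d(\tilde\gamma(s),\alpha\tilde\gamma(t))$, whose critical points and Hessian admit closed forms in terms of the angle and perpendicular separation between the two geodesics $\tilde\gamma$ and $\alpha\tilde\gamma$, via hyperbolic trigonometry. I would then convert these pointwise kernel estimates into the desired $L^{4/3}\to L^4(\gamma)$ bound by Hardy-Littlewood-Sobolev on the line, summed over dyadic shells of $\Gamma$ organized by translation length. Combined with an averaging argument over $T\in[1,c\log\lambda]$ in the spirit of~\cite{top}, this supplies the gain $(\log\lambda)^{-1/2}$ in $\|TT^*\|$, and therefore $(\log\lambda)^{-1/4}$ in $\|T\|_{L^2(M)\to L^4(\gamma)}$. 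Uniformity in $\gamma\in\Pi$ is automatic since injectivity radius, curvature, and volume entropy are all uniform on the compact quotient.

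The main obstacle I anticipate is making the stationary phase on $\mathbb H^2$ quantitatively uniform across all configurations of $(\tilde\gamma,\alpha\tilde\gamma)$, particularly when two lifted geodesics are nearly tangent, cross very obliquely, or both pass through a common narrow tube. In those regimes the Hessian of the phase degenerates and the naive $\lambda^{-1/2}$ decay deteriorates, so one must quantify the degeneracy precisely using hyperbolic trigonometry and check that the geometric loss is dominated by the sparsity of $\Gamma$-orbits together with the $T$-averaging. This is exactly where constant curvature, rather than merely nonpositive curvature as in Theorem~\ref{mainthm}, is essential, since the relevant hyperbolic integrals can then be evaluated in closed form instead of estimated through comparison theorems.
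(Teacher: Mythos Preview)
Your overall architecture matches the paper's: reduce to a $TT^*$ bound $L^{4/3}(\gamma)\to L^4(\gamma)$ for the kernel of $\chi(T(\lambda-P))$, lift to $\mathbb H^2$, unfold over $\Gamma$, and make every constant in the Chen--Sogge argument quantitative using explicit hyperbolic geometry. You also correctly isolate the real difficulty --- uniform control of the mixed Hessian of the phase $\phi(t,s)=d_{\tilde g}(\tilde\gamma(t),\alpha\tilde\gamma(s))$ across all configurations of $\tilde\gamma$ and $\alpha\tilde\gamma$ --- and correctly note that this is where constant curvature is essential. In the paper this is done by direct computation in the Poincar\'e half-plane model (Lemmas~\ref{lemma31} and~\ref{lemma33}), yielding $e^{-CT}\le|\phi''_{st}|$ (or $|\phi''_{st}/(t-t_0)|$) and $|\partial_t^j\phi''_{st}|\le e^{CT}$, exactly the kind of quantitative degeneracy control you anticipate.

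There are, however, two places where your mechanism diverges from the one that actually works. First, the $(\log\lambda)^{-1/2}$ gain does \emph{not} come from any averaging over $T\in[1,c\log\lambda]$; one simply fixes $T=c\log\lambda$. The gain is geometric: the deck transformations $\alpha$ for which $\alpha(D)$ meets a fixed-radius tube about $\tilde\gamma$ number only $O(T)$ (not $O(e^{CT})$), and summing the pointwise bound $|K_\alpha|\lesssim T^{-1}\lambda^{1/2}\phi^{-1/2}$ over dyadic shells within this tube yields the main term $T^{-1/2}\lambda^{1/2}$ in~\eqref{cmpmain}. Your split into ``identity'' versus ``non-identity'' misses this: it is the tube/non-tube dichotomy that isolates the $O(T)$ terms giving the main contribution.

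Second, for the $O(e^{CT})$ non-tube terms, pointwise kernel bounds plus Hardy--Littlewood--Sobolev will not suffice: a crude sum of pointwise bounds gives $e^{CT}T^{-1}\lambda^{1/2}$, which swamps the main term. The paper instead treats each such term as an oscillatory integral operator $h\mapsto\int a_\pm e^{\pm i\lambda\phi}h$ and proves an $L^2\to L^2$ bound with an explicit constant depending on $\phi''_{st}$ and its $t$-derivatives (Proposition~\ref{oscint}), obtaining $\|S_\lambda^{osc}\|_{L^2\to L^2}\le e^{CT}\lambda^{1/4}$; interpolation with the trivial $L^1\to L^\infty$ bound then gives $e^{CT}\lambda^{3/8}$ in $L^{4/3}\to L^4$, which is harmless once $T=c\log\lambda$ with $c$ small. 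So you should replace ``stationary phase $+$ HLS'' by ``non-degenerate/fold oscillatory integral operator estimates with explicit constants, then interpolate.''
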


Our paper is organized as follows. In Section 2, we give the proof of Theorem \ref{mainthm}. We do this by first proving a new local restriction estimate which corresponds to Lemma 2.2 in \cite{loglog}. Then we use this local estimate together with the improved $L^2$-restriction estimate \eqref{top1} of Blair and Sogge \cite{top} and the classical improved sup-norm estimate of B\'erard \cite{berard} to obtain improved $L^2(M)\rightarrow L^{4,\infty}(\gamma)$ estimate. Finally, we prove Theorem \ref{mainthm} by interpolating between the improved $L^2(M)\rightarrow L^{4,\infty}(\gamma)$ estimate and the $L^2(M)\rightarrow L^{4,2}(\gamma)$ estimate of Bak and Seeger \cite{seeger}. In Section 3, we show how to obtain further improvements under the assumption of constant negative curvature. We  follow the strategies that were introduced in \cite{chensogge} and \cite{top}. We shall lift all the calculations to the universal cover $\mathbb H^2$ and then use the Poincar\'e half-plane model to compute the dependence of various constants explicitly. 

Throughout our argument, we shall assume that the injectivity radius of $M$ is sufficiently large, and fix $\gamma$ to be a unit length geodesic segment. We shall use $P$ to denote the first order operator $\sqrt{-\Delta_g}$. Also, whenever we write $A\lesssim B$, it means $A\le CB$ and $C$ is some unimportant constant.
\section{Riemannian surface with nonpositive curvature}

We start with some standard reductions. Let $\rho\in S(\mathbb R)$ such that $\rho(0)=1$ and ${\rm supp}\ \hat\rho\subset[-1/2,1/2]$, then it is clear that the operator $\rho(T(\lambda-P))$ reproduces eigenfunctions, in the sense that
\[\rho(T(\lambda-P))e_\lambda=e_\lambda.\]
Consequently, we would have the estimate \eqref{main} if we could show that 
\begin{equation}\label{strong}
\|\rho(\log\lambda(\lambda-P))\|_{L^2(M)\rightarrow L^{4}(\gamma)}=O(\lambda^\frac14/(\log\log\lambda)^\frac18).
\end{equation}
The uniform bound \eqref{uniform} also follows by a standard compactness argument.
\subsection{A local restriction estimate}
To prove \eqref{strong}, we apply Sogge's strategy in \cite{loglog}. We shall need the following local restriction estimate.
\begin{lemma} \label{lemma1}Let $\lambda^{-1}\le r\le1$, $\gamma_r$ be a fixed subsegment of $\gamma$ with length $r$. Then we have
\[\|\rho(\lambda-P)f\|_{L^2(\gamma_r)}\lesssim \lambda^{\frac14}r^\frac14\|f\|_{L^2(M)}.\]
\end{lemma}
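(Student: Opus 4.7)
My plan is to proceed by a $TT^*$ argument, which reduces the desired bound $\|\rho(\lambda-P)\|_{L^2(M)\to L^2(\gamma_r)} \lesssim \lambda^{1/4}r^{1/4}$ to a symmetric $L^2(\gamma_r)\to L^2(\gamma_r)$ estimate. Writing $Tf = \rho(\lambda-P)f|_{\gamma_r}$, the composition $TT^*$ is an integral operator on $\gamma_r$ whose Schwartz kernel is the restriction to $\gamma_r\times\gamma_r$ of the kernel of $|\rho|^2(\lambda-P)$, so it will suffice to prove $\|TT^*\|_{L^2(\gamma_r)\to L^2(\gamma_r)} \lesssim \lambda^{1/2}r^{1/2}$ and invoke $\|T\|^2 = \|TT^*\|$. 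Note that $|\rho|^2\in S(\mathbb R)$ has Fourier transform supported in $[-1,1]$ (a convolution of two functions supported in $[-1/2,1/2]$), so finite speed of propagation confines the analysis to an $O(1)$-neighborhood of $\gamma_r$, well inside the injectivity radius.

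The second step is to invoke the classical pointwise bound on the kernel of the smoothed spectral projector: on a two-dimensional Riemannian manifold,
\[\bigl|\bigl(|\rho|^2\bigr)(\lambda-P)(x,y)\bigr| \lesssim \lambda\bigl(1+\lambda d_g(x,y)\bigr)^{-1/2}\]
uniformly for $d_g(x,y)\le 1$. This comes from writing
\[\bigl(|\rho|^2\bigr)(\lambda-P)(x,y) = \frac{1}{2\pi}\int \widehat{|\rho|^2}(t)\,e^{it\lambda}\,e^{-itP}(x,y)\,dt,\]
substituting the Hadamard/H\"ormander parametrix for the half-wave propagator $e^{-itP}$ on the time scale $|t|\le 1$, and applying stationary phase. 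The bound interpolates between the trivial diagonal estimate of size $\lambda$ (matching the local Weyl law in dimension two) and the off-diagonal estimate $\lambda^{1/2} d_g(x,y)^{-1/2}$ valid for $d_g(x,y)\gtrsim \lambda^{-1}$.

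The final step is Schur's test. Since $x, y\in\gamma_r$ lie on a single geodesic, $d_g(x,y)$ equals the arc length between $x$ and $y$ along $\gamma$, so
\[\sup_{x\in\gamma_r}\int_{\gamma_r}\lambda\bigl(1+\lambda d_g(x,y)\bigr)^{-1/2}\,dy \lesssim \int_0^r \lambda(1+\lambda s)^{-1/2}\,ds \lesssim \lambda^{1/2}r^{1/2}\]
for $\lambda^{-1}\le r\le 1$, with the symmetric bound holding in $y$. Schur's test then gives $\|TT^*\|\lesssim \lambda^{1/2}r^{1/2}$ and hence the lemma. The only genuinely technical ingredient is the classical Hadamard parametrix behind the kernel estimate; with that in hand, the rest is straightforward Schur-type bookkeeping, and a useful sanity check is that at $r=1$ one recovers the sharp Burq-G\'erard-Tzvetkov restriction bound $\lambda^{1/4}$.
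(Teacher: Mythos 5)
Your proof is correct and follows essentially the same route as the paper: a $TT^*$ reduction, the Hadamard-parametrix pointwise kernel bound for the smoothed spectral projector (the paper establishes $O(\lambda^{1/2}|s-s'|^{-1/2})$ off the diagonal and $O(\lambda)$ on it, which is the same as your $\lambda(1+\lambda d_g)^{-1/2}$), and then a Schur/Young integration in one variable to produce the $\lambda^{1/2}r^{1/2}$ operator norm. The only small difference is that the paper carries out the parametrix step explicitly, inserting a Littlewood--Paley cutoff $\beta(P/\lambda)$ and disposing of the high/low frequency remainder by a crude Weyl-type argument before applying stationary phase, whereas you cite the resulting kernel bound as classical; that is a matter of exposition rather than substance.
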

\begin{proof}
By a standard $TT^*$ argument, this is equivalent to showing that
\begin{equation}\label{TT}\|\chi(\lambda-P)h\|_{L^2(\gamma_r)}\lesssim \lambda^{\frac12}r^\frac12\|h\|_{L^2(\gamma_r)},\end{equation}
here $\chi=|\rho|^2$. Thus 
\[\chi(\lambda-P)h=\frac1{2\pi}\int\widehat\chi(t)e^{-i\lambda t}e^{itP}h\,dt.\]
We shall need a preliminary reduction. Let $\beta\in C_0^\infty$ be a Littlewood-Paley bump function, satisfying
\[\beta(s)=1,\ \mathrm{if}\ s\in[1/2,2],\ \ \mathrm{and}\ \beta(s)=0,\ \mathrm{if}\ s\not\in[1/4,4].\]
Then we claim that it suffices to prove:
\begin{equation}\label{LP}\Big\|\int\widehat\chi(t)e^{-i\lambda t}\beta(P/\lambda)e^{itP}h\,dt\Big\|_{L^2(\gamma)}\le C\lambda^\frac12 r^\frac12\|h\|_{L^2(\gamma)}.\end{equation}

Indeed, we note that the operator
\begin{equation}\label{R}\int\widehat\chi(t)e^{-i\lambda t}(1-\beta(P/\lambda))e^{itP}\,dt\end{equation}
has kernel
\[\sum\widehat\chi(\lambda-\lambda_j)(1-\beta)(\lambda_j/\lambda)e_j(\gamma(s))\overline{e_j(\gamma(s'))}.\]
Since $\chi\in C_0^\infty(\mathbb R)$ and $\beta$ is the Littlewood-Paley bump function, we see that
\[|\widehat\chi(\lambda-\lambda_j)(1-\beta)(\lambda_j/\lambda)|\le C(1+\lambda+\lambda_j)^{-4}.\]
On the other hand, by the Weyl formula,
\[\sum\limits_{\lambda_j\in[\lambda,\lambda+1]}|e_j(\gamma(s))e_j(\gamma(s'))|\le C(1+\lambda),\]
we conclude that the kernel of the operator given by \eqref{R} is $O(\lambda^{-1})$. This means that this operator enjoys better bounds than \eqref{TT}, which gives our claim that it suffices to prove \eqref{LP}.

To prove \eqref{LP}, we consider the corresponding kernel
\[K_\lambda(\gamma(s),\gamma(s'))=\int\widehat\chi(t)e^{-i\lambda t}\beta(P/\lambda)e^{itP}(\gamma(s),\gamma(s'))\,dt.\]
We claim that $K_\lambda$ satisfies
\begin{equation}
\label{kernel}
|K_\lambda(\gamma(s),\gamma(s'))|=O(\lambda^\frac12|s-s'|^{-\frac12}).
\end{equation}
Indeed, one may use a parametrix and the calculus of Fourier integral operators to see that modulo a trivial error term of size $O(\lambda^{-N})$
\[(\beta(P/\lambda)e^{itP})(\gamma(s),\gamma(s'))=\int_{\mathbb R^2}e^{i(s-s')\xi_1+it|\xi|}\,\alpha(t,s,s',|\xi|)\,d\xi,\]
where $\alpha$ is a zero-order symbol. See \cite{fio} and \cite{hangzhou}. Thus, modulo trivial errors,
\begin{equation}\label{kernel2}K_\lambda(\gamma(s),\gamma(s'))=\int\int_0^\infty e^{it(l-\lambda)}\,{\alpha(t,s,s',l)}\,\Big(\int_{S^1}e^{il(s-s')\langle(0,1),\omega\rangle}\,d\omega\Big)\,l\,dldt.\end{equation}
Integrating by parts in $t$ shows that the above expression is majorized by
\[\int_0^\infty(1+|l-\lambda|)^{-3}l\,dl=O(\lambda),\]
thus \eqref{kernel} is valid when $|s-s'|\le\lambda^{-1}$. To handle the remaining case, we recall that, by stationary phase,
\[\int_{S^1}e^{ix\cdot\omega}\,d\omega=O(|x|^{-\frac12}),\ \ \ |x|\ge1.\]
If we plug this into \eqref{kernel2} with $x=l(s-s',0)$, and integrate by parts in $t$, we conclude that if $\lambda^{-1}\le|s-s'|$, we have
\[|K_\lambda(\gamma(s),\gamma(s'))|\le\int_0^\infty(1+|l-\lambda|)^{-3}\,(l|s-s'|)^{-\frac12}\,l\,dl=O(\lambda^\frac12|s-s'|^{-\frac12}),\]
as claimed. By Young's inequality, the left hand side of \eqref{LP} is bounded by
\[\lambda^\frac12\Big(\int_0^r\Big|\int_0^r \frac 1{|s-s'|^\frac12}\,h(s')\,ds' \Big|^2\,ds\Big)^\frac12\le \lambda^\frac12r^\frac12\|h\|_{L^2([0,r])},\]
completing our proof.
\end{proof}
\begin {remark} \label{remark1}\rm{A similar argument gives the same estimate for $\rho(T(\lambda-P))f$ if $T\ge1$. Indeed, the same argument works for operator with kernel 
\begin{equation}\label{a}\Big[\int a(t)\,e^{it\lambda}\,e^{-itP}\,dt\Big](\gamma(s),\gamma(s')),\end{equation}
providing $a\in C_0^\infty(-1,1)$. While the operator $\rho(T(\lambda-P))$ corresponds to the kernel \[\Big[\frac 1 T\int a(t/T)\,e^{it\lambda}\,e^{-itP}\,dt\Big](\gamma(s),\gamma(s')),\] which is just an averaged version of \eqref{a}, thus it satisfies the same estimate.}
\end{remark}
\subsection{An improved weak-type estimate}
In this section, we prove the following improved weak-type estimate.
\begin{proposition}\label{weakpop}
Let $(M,g)$ be a 2-dimensional compact Riemannian manifold of nonpositive curvature. Then for $\lambda\gg 1$
\begin{equation}\label{weak}
\|\rho(\log\lambda(\lambda-P))\|_{L^2(M)\rightarrow L^{4,\infty}(\gamma)}=O(\lambda^\frac14/(\log\log\lambda)^\frac14).
\end{equation}
\end{proposition}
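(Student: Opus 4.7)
The plan is to adapt Sogge's strategy in \cite{loglog}, itself modeled on Bourgain's \cite{bourgain} weak-type Stein--Tomas argument, to the present geodesic-restriction setting. Writing $T:=\rho(\log\lambda(\lambda-P))$ and normalizing $\|f\|_{L^2(M)}=1$, the estimate \eqref{weak} is equivalent to $|E_\alpha|\,\alpha^4 \lesssim \lambda/\log\log\lambda$ for all $\alpha>0$, where $E_\alpha := \{s\in\gamma : |Tf(s)|>\alpha\}$. Three ingredients will drive the argument: (i) B\'erard's improved sup-norm estimate \cite{berard}, applied to the $(\log\lambda)^{-1}$-wide spectral window around $\lambda$, yields $\|Tf\|_{L^\infty(\gamma)}\lesssim \lambda^{1/2}/(\log\lambda)^{1/2}$, equivalently the diagonal bound $K(s,s)\lesssim \lambda/\log\lambda$ on the kernel $K$ of $TT^*$; (ii) the improved $L^2$-restriction estimate \eqref{top1} of Blair--Sogge applied to the same window gives $\|Tf\|_{L^2(\gamma)}\lesssim \lambda^{1/4}/(\log\lambda)^{1/4}$; (iii) Lemma \ref{lemma1} together with Remark \ref{remark1} provides the local restriction bound $\|Tf\|_{L^2(I)}\lesssim \lambda^{1/4}|I|^{1/4}$ for subsegments $I\subset\gamma$ with $|I|\in[\lambda^{-1},1]$, whose proof also supplies the pointwise off-diagonal kernel estimate $|K(s,s')|\lesssim \lambda^{1/2}|s-s'|^{-1/2}$.

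Two extreme ranges are disposed of by soft arguments. When $\alpha \gtrsim \lambda^{1/2}/(\log\lambda)^{1/2}$, ingredient (i) forces $E_\alpha=\emptyset$. When $\alpha \lesssim \lambda^{1/4}(\log\lambda)^{1/4}/(\log\log\lambda)^{1/2}$, Chebyshev combined with (ii) gives
\[
|E_\alpha|\alpha^4 \;\leq\; \alpha^2\|Tf\|_{L^2(\gamma)}^2 \;\lesssim\; \alpha^2 \lambda^{1/2}/(\log\lambda)^{1/2} \;\lesssim\; \lambda/\log\log\lambda.
\]
The decisive intermediate range is $\lambda^{1/4}(\log\lambda)^{1/4}/(\log\log\lambda)^{1/2}\lesssim \alpha\lesssim \lambda^{1/2}/(\log\lambda)^{1/2}$. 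Here I would invoke Bourgain's pigeonhole: extract a maximal $r$-separated subset $\{s_1,\dots,s_N\}\subset E_\alpha$ at a carefully chosen scale $r=r(\alpha)\in[\lambda^{-1},1]$, so that $N\gtrsim |E_\alpha|/r$, set $\mu=\sum_k \overline{Tf(s_k)}\,\delta_{s_k}$, and estimate
\[
N\alpha^2 \;\leq\; \sum_k |Tf(s_k)|^2 \;=\; \langle f,\, T^*\mu\rangle_{L^2(M)} \;\leq\; \|T^*\mu\|_{L^2(M)}.
\]
Then the identity $\|T^*\mu\|_{L^2(M)}^2 = \langle TT^*\mu,\mu\rangle_{L^2(\gamma)}$ is bounded by splitting the diagonal and off-diagonal contributions, using ingredient (i) on the diagonal and ingredient (iii) together with Schur's test on the off-diagonal. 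Combined with the packing inequality $N\gtrsim |E_\alpha|/r$, optimization of $r$ will then deliver the desired level-set bound.

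The main technical obstacle lies in this last step: the separation scale $r$ must be calibrated (in terms of $\alpha$) so as simultaneously to remain within the admissible range $[\lambda^{-1},1]$ of the local estimate, to force the diagonal contribution $\lambda/\log\lambda$ to dominate the off-diagonal contribution $\lambda^{1/2}N^{1/2}/r^{1/2}$, and to convert the $(\log\lambda)^{-1}$ diagonal gain into the sought $(\log\log\lambda)^{-1}$ factor on $|E_\alpha|\alpha^4$. The degradation from $\log$ to $\log\log$ is the inevitable price of the pigeonhole extraction, matching the loss in Sogge's \cite{loglog} critical $L^{p_c}$ estimate on $M$. Once \eqref{weak} is in hand, interpolating it with the $L^2(M)\to L^{4,2}(\gamma)$ estimate of Bak--Seeger \cite{seeger} in the next subsection will upgrade it to the strong bound \eqref{strong} claimed in Theorem \ref{mainthm}.
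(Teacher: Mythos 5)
Your skeleton is the right one (Bourgain's weak-type pigeonhole as in \cite{loglog}), and the two soft ranges are handled correctly, but your plan for the decisive intermediate range has a genuine gap. You propose to bound the off-diagonal part of $\langle TT^*\mu,\mu\rangle$ using $|K(s,s')|\lesssim\lambda^{1/2}|s-s'|^{-1/2}$, the kernel bound that comes out of the proof of Lemma \ref{lemma1}. That estimate is the \emph{generic} one, valid on any compact surface regardless of curvature, so it carries no gain whatsoever; running the pigeonhole with it at best reproduces the unimproved $O(\lambda^{1/4})$ weak-type bound and cannot produce the $(\log\log\lambda)^{-1/4}$ factor. The curvature input must enter \emph{off} the diagonal, through B\'erard's kernel estimate \eqref{berard}, which reads $|K(s,s')|\lesssim T^{-1}(\lambda/|s-s'|)^{1/2}+\lambda^{1/2}e^{CT}$. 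And here is the second missing idea: if you try to use \eqref{berard} directly for $T=\log\lambda$ (the window of the operator you are given), the error term $\lambda^{1/2}e^{CT}$ becomes $\lambda^{1/2+C}$, which is polynomially larger than $\lambda^{1/2}$ and ruins the estimate. This is exactly why the paper first \emph{shrinks the spectral scale}: it replaces $\rho(\log\lambda(\lambda-P))$ by $\rho(c_0\log\log\lambda(\lambda-P))$ for a small $c_0>0$, after checking that the difference $\bigl[\rho(c_0\log\log\lambda(\lambda-P))-I\bigr]\circ\rho(\log\lambda(\lambda-P))$ has $L^2(M)\to L^4(\gamma)$ norm $O(\lambda^{1/4}\log\log\lambda/\log\lambda)$ and is therefore negligible. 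With $T=c_0\log\log\lambda$, B\'erard gives $|K(s,s')|\lesssim(\log\log\lambda)^{-1}(\lambda/|s-s'|)^{1/2}+\lambda^{1/2}(\log\lambda)^{Cc_0}$; the $(\log\log\lambda)^{-1}$ in front is precisely the gain you are after, and the error term $\lambda^{1/2}(\log\lambda)^{1/40}$ (after making $c_0$ small) is absorbed by $\alpha^2$ exactly because the hard range is $\alpha\ge\lambda^{1/4}(\log\lambda)^{1/8}$. That is the engine of the proof, and it is entirely absent from your proposal. Note also that the degradation $\log\to\log\log$ is \emph{not} ``the price of the pigeonhole''; it comes from having to take $T\sim\log\log\lambda$ so that $e^{CT}$ stays under control.

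A smaller but real issue is your choice of test object. You take $\mu=\sum_k\overline{Tf(s_k)}\,\delta_{s_k}$ and use the B\'erard sup-norm bound to get $K(s,s)\lesssim\lambda/\log\lambda$ on the diagonal. The paper does not use the pointwise diagonal bound at all. It decomposes $A$ into length-$\approx r$, $C_0r$-separated pieces $A_j$ and sets $a_j=\overline{\psi_\lambda}\,\mathrm{1}_{A_j}$ (unimodular phase times an indicator, not a point mass), so that the ``diagonal'' term is $\sum_j\|T_\lambda^*a_j\|_{L^2(M)}^2$, which is exactly what the dual of Lemma \ref{lemma1} controls by $r^{1/2}\lambda^{1/2}|A|$. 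With point masses you would instead need to control $\sum_k|Tf(s_k)|^2$, for which you only have the crude $N\|Tf\|_\infty^2$, and the local restriction estimate — which is tailored to $L^2$ functions supported on short intervals — never gets to act. Switching from deltas to the indicator functions $\mathrm{1}_{A_j}$ is what makes Lemma \ref{lemma1} bite; it is not a cosmetic change. With both repairs in place (the $c_0\log\log\lambda$ window and the $a_j=\overline{\psi_\lambda}\mathrm{1}_{A_j}$ decomposition), the choice $r=\lambda\alpha^{-4}(\log\log\lambda)^{-2}$ then makes the off-diagonal term $\le\frac12\alpha^2|A|^2$ for $C_0$ large, and the bootstrap closes.
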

As discussed before, the $L^4$ restriction bound is saturated by both zonal functions and highest weight spherical harmonics. Thus as in \cite{loglog}, to get improved $L^4$ bounds, we shall need the following two improved results which corresponds to the range $2\le p<4$ and the range $4<p\le\infty$ respectively.
\begin{lemma}[\cite{top}] Let $(M,g)$ be as above. Then for $\lambda\gg1$ we have
\begin{equation}
\label{top}\|\rho(\log\lambda(\lambda-P))\|_{L^2(M)\rightarrow L^2(\gamma)}=O(\lambda^\frac1 4/(\log \lambda)^\frac14).\end{equation}
\end{lemma}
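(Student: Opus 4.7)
The plan is to reduce via a $TT^*$ argument to a kernel bound for $\chi(T(\lambda - P))$ with $T = \log\lambda$ and $\chi = |\rho|^2$, split the time integration in its Fourier representation into local and global parts, and handle the global part by lifting to the universal cover and exploiting Toponogov's comparison theorem.

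By a standard $TT^*$ argument it suffices to establish $\|\chi(T(\lambda - P))\|_{L^2(\gamma) \to L^2(\gamma)} \lesssim \lambda^{1/2} T^{-1/2}$, where $\chi(T(\lambda - P))$ has kernel
\[ K_{\lambda,T}(x,y) = \frac{1}{2\pi T}\int \widehat\chi(t/T)\, e^{it\lambda}\, e^{-itP}(x,y)\, dt, \]
and since $\mathrm{supp}\,\widehat\chi \subset [-1,1]$ the $t$-integral runs over $|t|\le T$. I would split $K_{\lambda,T} = K^{\mathrm{loc}} + K^{\mathrm{glob}}$ via a smooth cutoff at $|t|\sim 1$. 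For the local piece the stationary-phase computation in the proof of Lemma \ref{lemma1} applies verbatim, the only difference being the overall prefactor $T^{-1}$, yielding the pointwise bound $|K^{\mathrm{loc}}(\gamma(s),\gamma(s'))| \lesssim \lambda^{1/2} T^{-1} |s-s'|^{-1/2}$ and hence operator norm $O(\lambda^{1/2}/T)$, stronger than required.

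The essential content of the lemma is therefore the bound on $K^{\mathrm{glob}}$. For this I would lift to the universal cover $(\widetilde M,\tilde g)$, which by Cartan-Hadamard is diffeomorphic to $\mathbb R^2$, and decompose
\[ e^{-itP}(\gamma(s),\gamma(s')) = \sum_{\alpha\in\Gamma} \widetilde{e^{-itP}}(\tilde\gamma(s), \alpha\cdot \tilde\gamma(s')) \]
with $\Gamma$ the deck transformation group. Finite propagation confines the sum to $\alpha$ with $d_\alpha := d_{\tilde g}(\tilde\gamma(s), \alpha \tilde\gamma(s')) \le T$, and the identity term contributes only to $K^{\mathrm{loc}}$. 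On $\widetilde M$ the Hadamard parametrix is valid for all $|t|\le T$ because the curvature is nonpositive and $\widetilde M$ is simply connected; combining it with stationary phase in $(t,\xi)$ gives a per-$\alpha$ pointwise bound of the form $|K^\alpha(\gamma(s),\gamma(s'))| \lesssim \lambda^{1/2} T^{-1}\, d_\alpha^{-1/2}$.

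The main obstacle is controlling the sum over nontrivial deck transformations. Toponogov's comparison theorem supplies the two ingredients needed: a packing estimate bounding the number of $\alpha$ with $d_\alpha \le R$ for each dyadic $R \le T$, and a quantitative lower bound for how widely the preimages $\alpha\tilde\gamma(s')$ fan out around $\tilde\gamma(s)$ — this latter effect is precisely what fails in positive curvature and what distinguishes the present bound from the sharp $\lambda^{1/4}$ on $S^2$. Summing the per-$\alpha$ estimates over dyadic shells in $d_\alpha$ and applying Schur's test yields $\|K^{\mathrm{glob}}\|_{L^2(\gamma)\to L^2(\gamma)} \lesssim \lambda^{1/2} T^{-1/2}$, with the net gain of $T^{-1/2}$ over the local estimate arising from the ratio between the averaging factor $T^{-1}$ and the polynomial growth of the integrated dyadic sum. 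Combining the local and global estimates yields the claim.
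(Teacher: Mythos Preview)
The paper does not prove this lemma; it simply quotes the result from Blair--Sogge \cite{top}. So there is no ``paper's own proof'' to compare against, and the question is whether your sketch stands on its own.

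Your framework (the $TT^*$ reduction, the local/global split in $t$, lifting to the universal cover, and the per-$\alpha$ Hadamard parametrix bound $|K^\alpha|\lesssim \lambda^{1/2}T^{-1}d_\alpha^{-1/2}$) is all correct and matches the standard setup. The gap is in the final summation step. You assert that the dyadic sum over $\alpha$ has ``polynomial growth,'' yielding a net $T^{1/2}$ after integration, but in nonpositive (and especially negative) curvature the total number of deck transformations with $d_\alpha\le T$ is $O(e^{CT})$, not $O(T^a)$. Consequently, summing the pointwise per-$\alpha$ bounds and applying Schur's test gives $\lambda^{1/2}T^{-1}\sum_{2^k\le T}N_k\,2^{-k/2}$ with $N_k$ potentially of size $e^{C2^k}$, which is far too large. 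Toponogov does \emph{not} furnish a polynomial packing bound on all $\alpha$; what it does is bound the number of $\alpha$ for which the translate $\alpha\tilde\gamma$ remains inside a thin tube about $\tilde\gamma$, and for those one does get a linear count (cf.\ \eqref{tubenum} in the present paper).

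In the actual Blair--Sogge argument the translates are split into ``tube'' and ``off-tube'' classes. The tube class is controlled by size plus the linear count, exactly as you imagine. The off-tube class, however, cannot be handled by Schur: one needs to exploit the transversality of $\alpha\tilde\gamma$ to $\tilde\gamma$ to gain decay through the oscillation of the phase $\phi(t,s)=d_{\tilde g}(\tilde\gamma(t),\alpha\tilde\gamma(s))$ --- this is an $L^2\to L^2$ oscillatory-integral estimate, not a kernel-size bound. Section~3 of the present paper carries out precisely this tube/oscillatory split (for the $L^4$ problem in constant curvature), and you can see there that the off-tube piece is estimated via Proposition~\ref{oscint}, not via pointwise kernel bounds. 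Your ``fanning out'' remark points toward the right geometric fact, but it has to be fed into the mixed-derivative nondegeneracy of the phase rather than into a Schur/counting argument.
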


\begin{lemma}[\cite{berard}] If $(M,g)$ is as above then there is a constant $C=C(M,g)$ so that for $T\ge1$ and large $\lambda$ we have the following bounds for the kernel of $\eta(T(\lambda-P))$, $\eta=\rho^2$,
\begin{equation}\label{berard}
|\eta(T(\lambda-P))(x,y)|\le C\left[T^{-1}\Big(\frac{\lambda}{d_g(x,y)}\Big)^\frac12+\lambda^\frac12e^{CT}\right],
\end{equation}
\end{lemma}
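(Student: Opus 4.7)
The plan is to express the kernel through the wave group and then lift to the universal cover, where nonpositive curvature gives a globally valid Hadamard parametrix. By Fourier inversion,
\[\eta(T(\lambda-P))(x,y)=\frac{1}{2\pi T}\int \widehat\eta(t/T)\,e^{i\lambda t}\,(e^{-itP})(x,y)\,dt,\]
and since $\widehat\eta=\widehat\rho*\widehat\rho$ is supported in $[-1,1]$, the integration runs over $|t|\le T$. Writing $\Gamma=\pi_1(M)$ for the deck-transformation group acting on the universal cover $\tilde M$, the covering-space decomposition of the wave kernel gives
\[(e^{-itP})(x,y)=\sum_{\alpha\in\Gamma}(e^{-it\tilde P})(\tilde x,\alpha\tilde y),\]
and by finite propagation speed only the terms with $d_{\tilde g}(\tilde x,\alpha\tilde y)\le T$ survive.

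Next I would invoke the Cartan--Hadamard theorem: nonpositive curvature implies $\tilde M$ has no conjugate points, so the Hadamard parametrix is valid globally on $\tilde M$. Thus $(e^{-it\tilde P})(\tilde x,\tilde z)$ admits a global oscillatory-integral representation whose phase, in polar coordinates $\xi=r\omega$, takes the form $r\bigl(\langle\exp_{\tilde x}^{-1}(\tilde z),\omega\rangle-t\bigr)$ modulo smooth lower-order contributions, with a symbol of order zero built from the volume density in normal coordinates centered at $\tilde x$.

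The contribution of the identity term $\alpha=\mathrm{id}$ is the main piece. Stationary phase on the unit circle $S^1$ (for the $\omega$-integration) produces the factor $(r\,d_g(x,y))^{-1/2}$, the $r$-integration localizes to $r\sim\lambda$, and then integration by parts in $t$ against the rapidly oscillating $e^{i\lambda t}$ cut off by the slowly varying $\widehat\eta(t/T)$ yields the $T^{-1}$ gain, producing $T^{-1}(\lambda/d_g(x,y))^{1/2}$. For each non-identity $\alpha$ that survives finite propagation speed, the same parametrix together with the pointwise bound $O(\lambda^{1/2})$ for the frequency-$\lambda$ wave kernel in dimension two shows that the $\alpha$-th term contributes $O(\lambda^{1/2})$; the $T^{-1}$ gain is lost because the relevant phase is no longer uniformly non-stationary on the support of $\widehat\eta(t/T)$.

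Finally I would count deck transformations using volume comparison. Nonpositive curvature (G\"unther's inequality) gives $\mathrm{vol}\bigl(B_{\tilde M}(\tilde x,T)\bigr)\le C e^{CT}$, and since $\Gamma$ acts properly discontinuously with a fundamental domain of bounded volume,
\[\#\{\alpha\in\Gamma:d_{\tilde g}(\tilde x,\alpha\tilde y)\le T\}\le C e^{CT},\]
so summing the $O(\lambda^{1/2})$ contributions yields the error $\lambda^{1/2}e^{CT}$. The main obstacle I expect is the identity term: one must carry out the $\xi$-stationary-phase simultaneously with the $t$-integration-by-parts to produce $T^{-1}(\lambda/d_g(x,y))^{1/2}$ uniformly in $d_g(x,y)$, and must reconcile this bound near the diagonal, where the $d_g(x,y)^{-1/2}$ singularity of the stated estimate is absorbed by the trivial pointwise bound on the smoothed spectral projector rather than by genuine cancellation.
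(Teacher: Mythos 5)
The paper does not prove this lemma; it is stated as well known and attributed to B\'erard's 1977 paper, so the only meaningful comparison is with the standard argument there (and its modern reformulations in, e.g., Sogge's lecture notes). Your outline tracks that argument faithfully in its overall architecture: Fourier inversion to write the smoothed projector as a $t$-integral against the half-wave group, lifting to the universal cover via the deck-group sum, global validity of the Hadamard parametrix on the Cartan--Hadamard manifold $\tilde M$, stationary phase in the angular variable to produce the $(\lambda/d_g)^{1/2}$ factor and the $T^{-1}$ gain from the spectral window for the identity summand, and volume comparison to count the $O(e^{CT})$ deck transformations within distance $T$. Two minor imprecisions: finite propagation speed is a statement about $\cos(tP)$ (and $\sin(tP)/P$), not literally about the kernel of $e^{-itP}$, so one should either phrase the decomposition in terms of $\cos(tP)$ or argue via wave-front considerations; and the half-wave parametrix on $\tilde M$ is not a single global phase of the form you wrote but a sum of Hadamard terms with increasing regularity plus a remainder.

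The genuine gap is your claim that each non-identity summand contributes $O(\lambda^{1/2})$ with a constant independent of $\alpha$. On the universal cover the Hadamard coefficients $w_\nu(\tilde x,\tilde z)$ and the $N$-th remainder $R_N$ are \emph{not} uniformly bounded in the distance $d_{\tilde g}(\tilde x,\tilde z)$; the whole point of B\'erard's argument is to show that on a manifold of nonpositive curvature these satisfy $|w_\nu|\le C_\nu e^{C_\nu d}$ and $|R_N(t,\cdot,\cdot)|\le C_N e^{C_N|t|}$. Thus each surviving term contributes $O(\lambda^{1/2}e^{Cd_\alpha})$, and combining this exponential growth per term with the $O(e^{CT})$ count of terms is what produces the $\lambda^{1/2}e^{CT}$ error (with a new constant). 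If one could really bound each non-identity term by $O(\lambda^{1/2})$ uniformly, the volume count alone would suffice, but that bound is false without the coefficient estimates, and those estimates are where the hypothesis of nonpositive (or bounded) curvature enters in an essential way. Your write-up should make this control of the Hadamard coefficients explicit; as written, the argument silently assumes the hardest part of B\'erard's proof.
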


The first Lemma is a recent result of Blair and Sogge \cite{top}. The other bound \eqref{berard} is well-known and follows from the arguments in the paper of B\'erard \cite{berard}.

Now we are ready to prove Proposition \ref{weakpop}. It suffices to show that 
\begin{equation}
|\{x\in\gamma:|\rho(\log\lambda(\lambda-P))f(x)|>\alpha\}|\le C\alpha^{-4}\lambda(\log\log\lambda)^{-1}.
\end{equation}
assuming $f$ is $L^2$ normalized.
By Chebyshev inequality and \eqref{top}, we have
\begin{align*}|\{x\in\gamma:|\rho(\log\lambda(\lambda-P))f(x)|>\alpha\}|&\le \alpha^{-2}\int_\gamma|\rho(\log\lambda(\lambda-P))f|^2\,ds\\
&\le\alpha^{-2}\lambda^\frac12(\log\lambda)^{-\frac12}.\end{align*}
Note that for large $\lambda$ we have
\[\alpha^{-2}\lambda^\frac12(\log\lambda)^{-\frac12}\ll \alpha^{-4}\lambda(\log\log\lambda)^{-1},\ \ \ \ \mathrm{if}\ \alpha\le \lambda^\frac14(\log\lambda)^\frac18.\]
Thus it remains to show
\[|\{x\in\gamma:|\rho(\log\lambda(\lambda-P))f(x)|>\alpha\}|\le C\alpha^{-4}\lambda(\log\log\lambda)^{-1},
\]
when $\alpha\ge\lambda^\frac14(\log\lambda)^\frac18$.

We notice that
\[\big|\big[\rho(c_0\log\log\lambda(\lambda-\tau))-1\big]\rho(\log\lambda(\lambda-\tau))\big|\lesssim\frac{\log\log\lambda}{\log\lambda}(1+|\lambda-\tau|)^{-N},\] 
together with the estimate
\[\|\chi_\lambda\|_{L^2(M)\rightarrow L^4(\gamma)}=O(\lambda^\frac14),\]
we see that
\[\big\|\big[\rho(c_0\log\log\lambda(\lambda-P))-I\big]\circ\rho(\log\lambda(\lambda-P))f\big\|_{L^4(\gamma)}\lesssim \frac{\log\log\lambda}{\log\lambda}\lambda^\frac14\|f\|_{L^2(M)}.\]
Therefore we would be done if we could show that
\[|\{x\in\gamma:|\rho(c_0\log\log\lambda(\lambda-P))h(x)|>\alpha\}|\le C\alpha^{-4}\lambda(\log\log\lambda)^{-1},
\]
if $\alpha\ge\lambda^\frac14(\log\lambda)^\frac18$, and $\|h\|_{L^2(M)}=1.$

Let 
$$A=\{x\in\gamma:|\rho(c_0\log\log\lambda(\lambda-P))h(x)|>\alpha\}.$$
Take \[r=\lambda\alpha^{-4}(\log\log\lambda)^{-2}.\] We decompose $A$ into $r$-separated subsets $\cup_j A_j=A$ with length $\approx r$. By replacing $A$ by a set of proportional measure, we may assume that if $j\neq k$, we have $\mathrm{dist}(A_j,A_k)>C_0r$, where $C_0$ will be specified momentarily.

Let $T_\lambda=\rho(c_0\log\log\lambda(\lambda-P))$, which has dual operator $T_\lambda^*$ mapping $L^2(\gamma)\rightarrow L^2(M)$. Let  $\psi_\lambda(x)=T_\lambda f(x)/|T_\lambda f(x)|$, if $T_\lambda f(x)\neq 0$, otherwise let $\psi_\lambda(x)=1$. Let $S_\lambda=T_\lambda T^*_\lambda$ and $a_j=\overline{\psi_\lambda\mathrm {1}_{A_j}}$. Then by Chebyshev's inequality and Cauchy-Schwarz inequality, we have
\begin{align*}\alpha|A|&\le\left|\int_\gamma T_\lambda f\overline{\psi_\lambda\mathrm{1}_A}\,ds\right|
\le\left|\int_\gamma\sum\limits_j T_\lambda f a_j \,ds\right|\\
&=\left|\int_M\sum\limits_j T^*_\lambda a_j f\,dV_g\right|\le\left(\int_M\Big|\sum\limits_j T^*_\lambda a_j\Big|^2\,dV_g\right)^\frac12,
\end{align*}
squaring both sides, we see that
\[\alpha^2|A|^2\le\sum\limits_j\int_M|T^*_\lambda a_j|^2dV_g+\sum\limits_{j\neq k}\int_\gamma S_\lambda a_j \overline{a_k}\,ds=I+II.\]
By the dual version of Lemma \ref{lemma1} (see Remark \ref{remark1}), we see that 
\[I\lesssim r^\frac12\lambda^\frac12 \sum\limits_j\int_\gamma|a_j|^2\,ds=r^\frac12\lambda^\frac12|A|=\lambda\alpha^{-2}(\log\log\lambda)^{-1}|A|.\]
By making $c_0$ sufficiently small, we see from \eqref{berard} that we can control the kernel, $K_\lambda(s,s')$, of $S_\lambda$ by
\[|K_\lambda(s,s')|\le C\Big[(\log\log\lambda)^{-1}\Big(\frac{\lambda}{|s-s'|}\Big)^\frac12+\lambda^\frac12(\log\lambda)^\frac1{40}\Big],\]
thus
\begin{align*}II &\lesssim\Big[(\log\log\lambda)^{-1}\Big(\frac{\lambda}{C_0 r}\Big)^\frac12+\lambda^\frac12(\log\lambda)^\frac1{40}\Big]\sum\limits_{j\neq k}\|a_j\|_{L^1}\|a_k\|_{L^1}\\
&\le C_0^{-\frac12}
\alpha^2|A|^2+\lambda^\frac12 (\log\lambda)^\frac1 {40}|A|^2.\end{align*}
Since we are assuming $\alpha\ge\lambda^\frac14(\log\lambda)^\frac18$, for sufficiently large $C_0$, we have \[II\le \frac12 \alpha^2|A|^2,\]
thus
\[\alpha^2|A|^2\le C\lambda\alpha^{-2}(\log\log\lambda)^{-1}|A|+ \frac12 \alpha^2|A|^2,\]
which gives
\[|A|\le C\lambda\alpha^{-4}(\log\log\lambda)^{-1},\ \ \ \rm{if}\ \alpha\ge\lambda^\frac14(\log\lambda)^\frac18,\]
completing the proof of Proposition \ref{weakpop}.

\subsection{Proof of Theorem \ref{mainthm}}
We shall combine the improved $L^2(M)\rightarrow L^{4,\infty}(\gamma)$ estimate \eqref{weak} we obtained in the last section with the following $L^2(M)\rightarrow L^{4,2}(\gamma)$ estimate established by Bak and Seeger \cite{seeger} to prove our main theorem. This estimate of Bak and Seeger holds for general Riemannian manifold without any curvature condition.
\begin{lemma}[\cite{seeger}] \label{lemma2}Let $(M,g)$ be a 2-dimensional Riemannian manifold. Fix $\gamma\subset M$ to be a geodesic segment. Then we have the following estimate for the unit band spectral projection operator $\chi_{[\lambda,\lambda+1]}$
\begin{equation}\label{bak}
\|\chi_{[\lambda,\lambda+1]}f\|_{L^{4,2}(\gamma)}\le C(1+\lambda)^\frac14\|f\|_{L^2(M)}.
\end{equation}r
\end{lemma}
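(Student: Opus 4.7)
The plan is to prove this via a $TT^*$ argument combined with the sharp Lorentz-space convolution inequality of O'Neil. By duality $(L^{4,2})^* = L^{4/3,2}$, the estimate is equivalent to showing that the composition $T_\lambda T_\lambda^*$ on $\gamma$, where $T_\lambda$ denotes the restriction of $\chi_{[\lambda,\lambda+1]}$ to $\gamma$ (or, after the usual dominating reduction, of a smoothed projector $\rho(\lambda-P)$ with $\hat\rho$ compactly supported), maps $L^{4/3,2}(\gamma) \to L^{4,2}(\gamma)$ with operator norm $O(\lambda^{1/2})$. This reduction is standard and parallels the one used in the proof of Lemma \ref{lemma1}.

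The next step is to produce a pointwise bound for the Schwartz kernel of $T_\lambda T_\lambda^*$ along the geodesic. The computation is identical in structure to the one already carried out for Lemma \ref{lemma1}: using the Fourier integral parametrix for $\beta(P/\lambda)e^{itP}$, integrating by parts in the time variable to localize to frequencies near $\lambda$, and applying stationary phase on $S^1$ to the angular integral, one obtains
\[ |K_\lambda(\gamma(s),\gamma(s'))| \lesssim \lambda^{1/2}|s-s'|^{-1/2} \]
for $|s-s'|\ge\lambda^{-1}$, together with the trivial bound $|K_\lambda|\lesssim\lambda$ for $|s-s'|\le\lambda^{-1}$; the latter is itself dominated by the same square-root formula, so the pointwise inequality $|K_\lambda(\gamma(s),\gamma(s'))|\lesssim\lambda^{1/2}|s-s'|^{-1/2}$ holds globally on $\gamma\times\gamma$.

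The final step is the Lorentz-space Young inequality of O'Neil. Since the one-dimensional kernel $|s|^{-1/2}$ lies in $L^{2,\infty}(\mathbb{R})$, one has the convolution embedding
\[ L^{4/3,2}(\mathbb{R}) * L^{2,\infty}(\mathbb{R}) \hookrightarrow L^{4,2}(\mathbb{R}), \]
as the indices satisfy $1+\tfrac14 = \tfrac34 + \tfrac12$ and $\tfrac12 = \tfrac12 + \tfrac{1}{\infty}$. Combining this with the kernel bound from the previous step yields $\|T_\lambda T_\lambda^* f\|_{L^{4,2}(\gamma)}\lesssim \lambda^{1/2}\|f\|_{L^{4/3,2}(\gamma)}$, and extracting the square root gives \eqref{bak}.

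The main subtlety is the use of the second Lorentz parameter. Standard Hardy-Littlewood-Sobolev would only yield the convolution bound $L^{4/3}\to L^4$, which translates to the $L^2(M)\to L^4(\gamma)$ estimate of \cite{burq}, \cite{hu} and nothing stronger. Sharpening the target to the genuinely smaller space $L^{4,2}(\gamma)$ requires tracking both Lorentz indices through O'Neil's theorem, exploiting that the extremizing kernel $|s|^{-1/2}$ actually sits in $L^{2,\infty}$. It is precisely this Lorentz refinement that allows the subsequent real interpolation with the weak-type bound \eqref{weak} to produce the logarithmic improvement in Theorem \ref{mainthm}.
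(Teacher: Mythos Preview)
The paper does not supply its own proof of this lemma: it simply cites it as a special case of the results of Bak and Seeger \cite{seeger} on restriction of spectral projectors to hypersurfaces, so there is nothing to compare against at the level of argument. Your sketch is correct and is in fact precisely the mechanism behind the Bak--Seeger result in this setting: the pointwise kernel bound $|K_\lambda(\gamma(s),\gamma(s'))|\lesssim\lambda^{1/2}|s-s'|^{-1/2}$ (which the paper already derives as \eqref{kernel} in the proof of Lemma~\ref{lemma1}) combined with O'Neil's Lorentz convolution inequality $L^{4/3,2}*L^{2,\infty}\hookrightarrow L^{4,2}$ yields the $TT^*$ bound, and the square-root extraction finishes. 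Your closing remark that tracking the second Lorentz index through O'Neil is exactly what upgrades the Burq--G\'erard--Tzvetkov $L^4$ bound to $L^{4,2}$ is also on point.
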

We remark that Lemma \ref{lemma2} is a special case of the results in \cite{seeger} regarding the restriction of eigenfunctions to hypersurfaces for manifolds with dimension $n\ge2$.

Let us recall some basic facts about the Lorentz space $L^{p,q}(\gamma)$. First,  for a function $u$ on $M$, we define the corresponding distribution function $\omega(\alpha)$ with respect to $\gamma$ as
\[\omega(\alpha)=|\{x\in\gamma:|u(x)|>\alpha\},\ \ \ \alpha>0. \]  $u^*$ is the nonincreasing rearrangement of $u$ on $\gamma$, given by
\[u^*(t)=\inf\{\alpha:\omega(\alpha)\le t\},\ \ \ t>0.\]
Then the Lorentz space $L^{p,q}(\gamma)$ for $1\le p<\infty$ and $1\le q<\infty$ is defined as all $u$ so that
\begin{equation}
\|u\|_{L^{p,q}(\gamma)}=\Big(\frac q p\int_0^\infty\big[t^\frac1pu^*(t)\big]^q\frac{dt}t\Big)^\frac1q<\infty,
\end{equation}

It's well known that for the special case $p=q$, the Lorentz norm $\|{}\cdot{}\|_{L^{p,p}(\gamma)}$ agrees with the standard $L^p$ norm $\|{}\cdot{}\|_{L^p(\gamma)}$. Moreover, we also have
\[\sup\limits_{t>0}t^\frac1pu^*(t)=\sup\limits_{\alpha>0}\alpha[\omega(\alpha)]^\frac1p.\]

If we take $u=\chi_{[\lambda,\lambda+(\log\lambda)^{-1}]}f$, and assume $\|f\|_{L^2(M)}=1$, then by \eqref{weak} we have

\begin{equation}\label{weak'}
\sup\limits_{t>0}t^\frac14 u^*(t)\le C\lambda^\frac14 (\log\log\lambda)^{-\frac14}.
\end{equation}

On the other hand, since $\chi_{[\lambda,\lambda+1]}u=u$, by Lemma \ref{lemma2} we have
\begin{equation}
\label{4,2}
\|u\|_{L^{4,2}(\gamma)}\le C\lambda^\frac14\|u\|_{L^2(M)}\le C\lambda^\frac14.\end{equation}

Interpolating between \eqref{weak'} and \eqref{4,2}, we then get
\begin{align*}
\|u\|_{L^4(\gamma)}&=\left(\int_0^\infty\big[t^\frac14u^*(t)\big]^4\frac{dt}t\right)^\frac14\\
&\lesssim \left(\sup\limits_{t>0}t^\frac14u^*(t)\right)^\frac12\|u\|_{L^{4,2}(\gamma)}^\frac12\\
&\lesssim \big(\lambda^\frac14(\log\log\lambda)^{-\frac14}\big)^\frac12\lambda^\frac18\\
&=\lambda^\frac14(\log\log\lambda)^{-\frac18},
\end{align*}
which completes the proof of Theorem \ref{mainthm}.

\section{Riemannian surfaces with constant negative curvature}
We shall apply the strategies in \cite{chensogge} and \cite{top} to prove Theorem \ref{thm2}. Recall in \cite{chensogge}, Chen and Sogge showed that for Riemannian surfaces with nonpositive curvature,
\begin{equation}\label{cmpmain}\begin{gathered}
  {\left( {\int_{0}^{1} {{{\left| {\int_{0}^{1} {\chi (T(\lambda  - P))(\gamma (t),\gamma (s))h(s)ds} } \right|}^4}dt} } \right)^{\frac{1}
{4}}} \hfill \\
   \le C{T^{ - \frac{1}
{2}}}{\lambda ^{\frac{1}
{2}}}{\left\| h \right\|_{{L^{\frac{4}
{3}}}([0,1])}} + {C_T}{\lambda ^{\frac{3}
{8}}}{\left\| h \right\|_{{L^{\frac{4}
{3}}}([ 0,1])}},\hfill \\
\end{gathered}
\end{equation}
here $\chi (T(\lambda  - P))(x,y)$ denotes the kernel of the multiplier operator $\chi (T(\lambda  - P))$. Clearly, this would imply \eqref{chensogge} if one takes $T$ to be sufficiently large. We shall show that under the assumption of constant negative curvature, the constant $C_T$ in \eqref{cmpmain} can be taken to be $e^{CT}$ where $C>0$ is some constant independent of $T$. Then Theorem \ref{thm2} would be proved if we set $T=c\log\lambda$, for $c>0$ to be sufficiently small. From now on, we shall use $C$ to denote various positive constants that are independent of $T$.

\subsection{Some reductions}
Choose a bump function $\beta \in C_0^\infty(\mathbb{R})$ satisfying
\[\beta(\tau)=1 \quad {\rm for} \quad |\tau|\le3/2,\quad  {\rm and} \quad \beta(\tau)=0, \quad |\tau|\ge2.\]
Then we may write
\[\begin{gathered}
  \chi (T(\lambda  - P))(x,y) = \frac{1}
{{2\pi T}}\int {\beta (\tau )\hat \chi (\tau /T){e^{i\lambda \tau }}({e^{ - i\tau P}})(x,y)d\tau }  \hfill \\
   + \frac{1}
{{2\pi T}}\int {(1 - \beta (\tau ))\hat \chi (\tau /T){e^{i\lambda \tau }}({e^{ - i\tau P}})(x,y)d\tau }  = {K_0}(x,y) + {K_1}(x,y). \hfill \\
\end{gathered}
\]
As \eqref{kernel} in the proof of Lemma 1, one may use the Hadamard parametrix to see that
\begin{equation}\label{K_0}{\left( {\int_{ 0}^{1} {{{\left| {\int_{ 0}^{1} {{K_0}(\gamma (t),\gamma (s))h(s)ds} } \right|}^4}} dt} \right)^{\frac{1}
{4}}} \le  C{T^{ - 1}}{\lambda ^{\frac{1}
{2}}}{\left\| h \right\|_{{L^{\frac{4}
{3}}}([ 0,1])}},\end{equation}
which is better than the bounds in (\ref{cmpmain}).
Since the kernel of $\chi (T(\lambda  + P))$ is $O(\lambda^{-N})$ with constants independent of $T$, we are left to consider the integral operator $S_\lambda$:
\begin{equation}{S_\lambda }h(t) = \frac{1}
{{\pi T}}\int_{ - \infty }^\infty  {\int_{ 0}^{1} {(1 - \beta (\tau ))\hat \chi (\tau /T){e^{i\lambda \tau }}(\cos \tau P)(\gamma (t),\gamma (s))h(s)dsd\tau.} }
\end{equation}

As in \cite{chensogge} and \cite{top}, we now use the Hadamard parametrix and the Cartan-Hadamard theorem to lift the calculations up to the universal cover $(\mathbb{R}^2,\tilde g)$ of $(M,g)$.

Let $\Gamma$ denote the group of deck transformations preserving the associated covering map $\kappa :{\mathbb{R}^2} \to M$ coming from the exponential map from $\gamma(0)$ associated with the metric $g$ on $M$. The metric $\tilde g$ is its pullback via $\kappa$. Choose also a Dirchlet fundamental domain, $D \simeq M$, for $M$ centered at the lift $\tilde \gamma(0)$ of $\gamma(0)$. We shall let $\tilde \gamma(t), t\in \mathbb{R}$ denote the lift of the geodesic $\gamma(t), t\in \mathbb{R}$, containing the unit geodesic segment $\gamma(t), t\in [0,1]$. We measure the distances in $(\mathbb{R}^2,\tilde g)$ using its Riemannian distance function $d_{\tilde g}(\ \cdot\ ,\ \emph{}\cdot\ )$.

Following \cite{chensogge}, we recall that if $\tilde x$ denotes the lift of $x\in M$ to $D$, then we have the following formula \[(\cos t\sqrt { - {\Delta _g}} )(x,y) = \sum\limits_{\alpha  \in \Gamma } {(\cos t} \sqrt { - {\Delta _{\tilde g}}} )(\tilde x,\alpha (\tilde y)).\]
Consequently, we have, for $t\in [0,1]$,
\[{S_\lambda }h(t) = \frac{1}
{{\pi T}}\sum\limits_{\alpha  \in \Gamma } {\int_{ - \infty }^\infty  {\int_{ 0}^{1} {(1 - \beta (\tau ))\hat \chi (\tau /T){e^{i\lambda \tau }}(\cos \tau \sqrt { - {\Delta _{\tilde g}}} )(\tilde \gamma (t),\alpha (\tilde \gamma (s)))h(s)\,dsd\tau }.} }
\]
Let
 \begin{equation}\label{tube}{{\rm T}_R}(\tilde \gamma ) = \{ (x,y) \in {\mathbb{R}^2}:{d_{\tilde g}}((x,y),\tilde \gamma ) \le R\}\end{equation}
and
\[{\Gamma _{{{\rm T}_R}(\tilde \gamma )}} = \{ \alpha  \in \Gamma :\alpha (D) \cap {{\rm T}_R}(\tilde \gamma ) \ne \emptyset \}. \]
From now on we fix $R\approx {\rm Inj} M$.

We write
\[{S_\lambda }h(t) = S_\lambda ^{tube}h(t) + {S_\lambda^{osc} }h(t) = \sum\limits_{\alpha  \in {\Gamma _{{{\rm T}_R}(\tilde \gamma )}}}  +  \sum\limits_{\alpha  \notin {\Gamma _{{{\rm T}_R}(\tilde \gamma )}}},t\in[0,1]. \]
By the Huygens principle,
\[(\cos \tau \sqrt { - {\Delta _{\tilde g}}} )(\tilde \gamma (t),\alpha (\tilde \gamma (s)))=0,\quad {\rm if} \quad d_{\tilde g}(\tilde \gamma (t),\alpha (\tilde \gamma (s)))>\tau.\]
Recall that $\hat \chi(\tau)=0$ if $|\tau|\ge1$. Hence ${d_{\tilde g}}(\tilde \gamma (t),\alpha (\tilde \gamma (s)))\le T, s,t\in[0,1].$

Since there are only $O(1)$ ``translates''  of $D$, $\alpha(D)$, that intersect any geodesic ball with arbitrary center of radius $R$, it follows that
\begin{equation}\label{tubenum}\# \{ \alpha  \in {\Gamma _{{{\rm T}_R}(\tilde \gamma )}}:{d_{\tilde g}}(0,\alpha (0)) \in [{2^k},{2^{k + 1}}]\}  \le C{2^k}.\end{equation}
Thus the number of nonzero summands in $S_\lambda ^{tube}h(t)$ is $O(T)$ and in $S_\lambda ^{osc}h(t)$ is $O(e^{CT})$.

Given $\alpha \in \Gamma$ set with $s,t\in[0,1]$
\[K_\alpha(t,s) = \frac{1}
{{\pi T}}\int_{ - T}^T {(1 - \beta (\tau ))\hat \chi (\tau /T){e^{i\lambda \tau }}(\cos \tau \sqrt { - {\Delta _{\tilde g}}} )(\tilde \gamma (t),\alpha (\tilde \gamma (s)))h(s)d\tau }.\]
When $\alpha=Identity$, by using the Hadamard parametrix, we get
\[|K_{\rm Id}(t,s)| \le CT^{-1}\lambda^{\frac12}|t-s|^{-\frac12}.\]
Thus, \[\left| {\int_{ 0}^{1} {K_{\rm Id}(t,s)ds} } \right| \le C{T^{ - 1}}{\lambda ^{\frac{1}
{2}}}.\]
If $\alpha \ne Identity$, we set
\[\phi (t,s) = {d_{\tilde g}}(\tilde \gamma (t),\alpha (\tilde \gamma (s))), s,t\in[0,1].\]
Then  by the Huygens principle and $\alpha \ne Identity$, we have \begin{equation}\label{phileT}2\le \phi(t,s)\le T, \quad {\rm if} \quad s,t\in[0,1].\end{equation}
Following Lemma 3.1 in \cite{chensogge}, we can write \[K_\alpha(t,s) = w(\tilde \gamma (t),\alpha (\tilde \gamma (s)))\sum\limits_ \pm  {{a_ \pm }(T,\lambda ;\phi (t,s)){e^{ \pm i\lambda \phi (t,s)}} + R(t,s)}, \]
where $|w(x,y)|\le C$, and for each $j=0,1,2,...$, there is a constant $C_j$ independent of $T$, $\lambda \ge 1$ so that \begin{equation}\label{amplitude}\left| {\partial _r^j{a_ \pm }(T,\lambda ;r)} \right| \le {C_j}{T^{ - 1}}{\lambda ^{\frac{1}
{2}}}{r^{ - \frac{1}
{2} - j}}, r\ge1.\end{equation}
Using the Hadamard parametix with an estimate on the remainder term (see \cite{hangzhou}), we see that \[|R(t,s)|\le e^{CT}.\]

Therefore we are able to estimate $S_\lambda^{tube}h$ by Young's inequality. Indeed, the kernel $K_\lambda ^{tube}(t,s)$ of $S_\lambda^{tube}$ satisfies
\[\left| {K_\lambda ^{tube}(t,s)} \right| \le C{T^{ - 1}}{\lambda ^{\frac{1}
{2}}}\sum\limits_{1\le 2^k\le T} {{2^k2^{-k/2}}}  + {e^{CT}} = C{T^{ - \frac{1}
{2}}}{\lambda ^{\frac{1}
{2}}} + {e^{CT}}.\]
Consequently,
\begin{equation}\label{stube}{\left\| {S_\lambda ^{tube}h} \right\|_{{L^4}[0,1]}} \le (C{T^{ - \frac{1}
{2}}}{\lambda ^{\frac{1}
{2}}} + {e^{CT}}){\left\| h \right\|_{{L^{\frac{4}
{3}}}[ 0,1]}}.\end{equation}

\subsection{A stationary phase argument}
To deal with the remaining part $S_\lambda^{osc}h(t)$, we need the following detailed version of the oscillatory integral estimates. (See e.g. Chapter 1 of \cite{fio}).

\begin{proposition}\label{oscint}Let $a\in C_0^\infty(\mathbb{R}^2)$, let $\phi\in C^\infty(\mathbb{R}^2)$ be real valued and $\lambda>0$, set
\[{T_\lambda }f(t) = \int_{ - \infty }^\infty  {{e^{i\lambda \phi (t,s)}}a} (t,s)f(s)ds,f\in C_0^\infty(\mathbb{R}).\]
If $\phi_{st}^{''}\ne 0$ on ${\rm supp}$ a, then \[{\left\| {{T_\lambda }f} \right\|_{{L^2}(\mathbb{R})}} \le {C_{a,\phi }}\lambda^{-\frac12}{\left\| f \right\|_{{L^2}(\mathbb{R})}},\]
where \begin{equation}\label{const1}C_{a,\phi}=C{\rm diam}({\rm supp}\ a)^{\frac12}\Bigg\{{\|a\|_\infty+\frac{ {\sum\limits_{0 \le i,j \le 2} {\|\partial _t^ia\|_\infty\|\partial _t^j\phi _{st}^{''}\|_\infty} }}
{{{{\inf |\phi _{st}^{''}|^2}}}}}\Bigg\}.\end{equation}
If $\phi_{st}^{''}(t_0,s)=0$, $\phi_{stt}^{'''}(t_0,s)\ne0$ for all $s\in{\rm supp}\,a(t_0,{}\cdot{})$, and $\phi_{st}^{''}(t,s)\ne0$ whenever $(t,s)\in {\rm supp}\,a\setminus \{(t,s):t=t_0\}$, then
\[{\left\| {{T_\lambda }f} \right\|_{{L^2}(\mathbb{R})}} \le {C_{a,\phi }'}\lambda^{-\frac14}{\left\| f \right\|_{{L^2}(\mathbb{R})}},\]
where
\begin{equation}\label{const2}C_{a,\phi}'=C{\rm diam}({\rm supp}\ a)^{\frac14}\Bigg\{\|a\|_\infty+\frac{\sum\limits_{0 \le i,j \le 2} {\|\partial _t^ia\|_\infty \|\partial _t^j\phi _{st}^{''}\|_\infty}}
{{\inf {|\phi _{st}^{''}/(t - {t_0})|^2}}}\Bigg\}.
\end{equation}
The norm $\|\cdot\|_\infty$ and the infimum are taken on ${\rm supp}$ a. The constant $C>0$ is independent of $\lambda$, $a$ and $\phi$.
\end{proposition}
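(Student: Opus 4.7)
The plan is to prove the two assertions by a $T^\ast T$ argument combined with integration by parts, the first directly and the second via a dyadic reduction to the first. Throughout I will keep careful track of how every constant depends on derivatives of $a$ and $\phi$ so that the formulas \eqref{const1} and \eqref{const2} emerge.

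For the non-degenerate case I would form $T_\lambda^\ast T_\lambda$, whose kernel is
\[
K(s,s') = \int e^{i\lambda[\phi(t,s)-\phi(t,s')]}\,\overline{a(t,s)}\,a(t,s')\,dt.
\]
Writing $\partial_t[\phi(t,s)-\phi(t,s')] = \int_{s'}^{s} \phi_{st}''(t,\sigma)\,d\sigma$, the $t$-derivative of the phase has magnitude at least $|s-s'|\inf|\phi_{st}''|$, so one may integrate by parts twice in $t$. Each step produces the quotient $\partial_t^j\phi_{st}''/\inf|\phi_{st}''|^2$ together with $\partial_t^i a$ for $i,j\le 2$, and the final $dt$-integration contributes a factor $\operatorname{diam}(\operatorname{supp} a)$. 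Combining this with the trivial bound $|K(s,s')|\le \|a\|_\infty^2\operatorname{diam}(\operatorname{supp} a)$ yields
\[
|K(s,s')| \;\le\; C\,\operatorname{diam}(\operatorname{supp} a)\,\min\!\Bigl(\|a\|_\infty^2,\; \lambda^{-2}|s-s'|^{-2}A^2\Bigr),
\]
where $A$ is the bracketed quantity in \eqref{const1}. Schur's test applied to the minimum then gives $\|T_\lambda^\ast T_\lambda\|_{L^2\to L^2}\lesssim \lambda^{-1}\operatorname{diam}(\operatorname{supp} a)\,\|a\|_\infty A$, and taking a square root produces the constant $C_{a,\phi}$ with its $\operatorname{diam}(\operatorname{supp} a)^{1/2}$ factor.

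For the fold-type case I would perform a dyadic decomposition in $|t-t_0|$. Let $\{\chi_k\}$ be a Littlewood-Paley partition of unity with $\chi_k$ supported in $|t-t_0|\sim 2^{-k}$, and let $T_{\lambda,k}$ denote the operator with amplitude $\chi_k(t)a(t,s)$. On $\operatorname{supp}\chi_k$ one has $\operatorname{diam}\sim 2^{-k}$ and $\inf|\phi_{st}''|\sim 2^{-k}\inf|\phi_{st}''/(t-t_0)|$, so the non-degenerate estimate applied to $T_{\lambda,k}$ gives a bound $\|T_{\lambda,k}\|\lesssim 2^{-k/2}\lambda^{-1/2}A_k'$, where $A_k'$ involves the scale-invariant quantity $\inf|\phi_{st}''/(t-t_0)|^2$ in the denominator. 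In the complementary regime $2^{-k}\lesssim \lambda^{-1/2}$ I would replace this by the Cauchy-Schwarz bound $\|T_{\lambda,k}\|\lesssim 2^{-k/2}\|a\|_\infty\operatorname{diam}(\operatorname{supp} a)^{1/2}$. Since the operators $T_{\lambda,k}$ have ranges supported on disjoint annuli in $t$, one has $\|T_\lambda f\|_{L^2}^2=\sum_k \|T_{\lambda,k}f\|_{L^2}^2$, and summing the two regimes while balancing at the critical scale $2^{-k}\sim\lambda^{-1/2}$ produces the claimed $\lambda^{-1/4}$ rate together with the factor $\operatorname{diam}(\operatorname{supp} a)^{1/4}$.

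The main technical nuisance I expect is the cutoff $\chi_k$: naively $\|\partial_t^i\chi_k\|_\infty\sim 2^{ik}$, which threatens the final bound. The point is that in the expression \eqref{const2} the scale-invariant denominator $\inf|\phi_{st}''/(t-t_0)|^2$ removes two powers of $2^{-k}$ compared with \eqref{const1}, which exactly absorbs the growth of $\|\partial_t^j\phi_{st}''\|_\infty$ and of the cutoff derivatives after the Taylor expansion $\phi_{st}''(t,s) = (t-t_0)\tilde\phi(t,s)$ with $\tilde\phi\in C^\infty$ bounded below. Verifying this cancellation carefully on each annulus (equivalently, rescaling $\tau = 2^{k}(t-t_0)$ so that the rescaled amplitude has $O(1)$ derivatives and the rescaled phase satisfies the non-degenerate hypothesis) is what produces the constant in the form stated in \eqref{const2}, and is where most of the bookkeeping lies.
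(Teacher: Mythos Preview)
Your treatment of the nondegenerate case is the same as the paper's: form $T_\lambda T_\lambda^*$, integrate by parts twice in $t$, and apply Schur/Young.

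The fold case is where your proposal diverges from the paper, and it contains a genuine gap. You claim that applying \eqref{const1} to the piece with amplitude $\chi_k a$ gives $\|T_{\lambda,k}\|\lesssim 2^{-k/2}\lambda^{-1/2}A_k'$ with an essentially scale-invariant $A_k'$. But the bracket in \eqref{const1} on the $k$-th annulus is
\[
\|a\|_\infty+\frac{\sum_{i,j\le 2}\|\partial_t^i(\chi_k a)\|_\infty\,\|\partial_t^j\phi_{st}''\|_\infty}{\inf_{|t-t_0|\sim 2^{-k}}|\phi_{st}''|^2},
\]
whose denominator is $\sim 2^{-2k}B^2$ with $B=\inf|\phi_{st}''/(t-t_0)|$, while the numerator already contains the $O(1)$ term $\|a\|_\infty\|\phi_{stt}'''\|_\infty$ (take $i=0,\ j=1$), before one even touches the cutoff derivatives. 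Hence the bracket grows at least like $2^{2k}$, and the ``exact absorption'' you anticipate does not occur; the rescaling $\tau=2^k(t-t_0)$ does not change this. If one redoes the integration by parts directly on each annulus instead of invoking \eqref{const1} as a black box, the honest bound is $\|T_{\lambda,k}\|\lesssim 2^{+k/2}\lambda^{-1/2}$ times a $k$-independent constant --- it \emph{grows} in $k$ --- and it is this growing bound that balances the Cauchy--Schwarz estimate $2^{-k/2}D^{1/2}\|a\|_\infty$ at $2^{-k}\sim\lambda^{-1/2}$ to produce $\lambda^{-1/4}$. With your stated $2^{-k/2}$ bound both regimes decrease in $k$, there is nothing to balance, and you would in effect be proving the nondegenerate rate $\lambda^{-1/2}$ in the fold situation, which is false in general.

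The paper avoids the dyadic decomposition entirely. It stays at the level of the kernel $K(s,s')$ of $T_\lambda T_\lambda^*$: for each fixed $s\ne s'$ it inserts a \emph{single} cutoff $\rho((t-t_0)/\delta)$, bounds the piece $|t-t_0|\le 2\delta$ trivially by $C\delta\|a\|_\infty^2$, integrates by parts twice on $|t-t_0|\ge\delta$ using $|\varphi_t'|\gtrsim B\,|t-t_0|$, and then chooses $\delta=(\lambda|s-s'|)^{-1/2}$ to balance. This yields $|K(s,s')|\lesssim(\lambda|s-s'|)^{-1/2}$ with the constants of \eqref{const2} directly, and one $ds'$ integration finishes. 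The key point is that the cutoff derivatives $\delta^{-j}$ and the phase denominators $|t-t_0|^{-m}$ meet \emph{inside} the integral $\int_{|t-t_0|>\delta}\bigl(|t-t_0|^{-4}+\delta^{-2}|t-t_0|^{-2}\bigr)\,dt\sim\delta^{-3}$, rather than through the lossy product form of \eqref{const1}; this is exactly the interaction your black-box application misses.
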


\begin{proof}By a $TT^*$ argument and Young's inequality, it suffices to estimate the kernel of $T_\lambda T_\lambda^*$
\[K(s,s')=\int {{e^{i\lambda (\phi (t,s) - \phi (t,s'))}}a(t,s)\overline {a(t,s')} dt}. \]
Let \[\varphi (t,s,s') = \frac{{\phi (t,s) - \phi (t,s')}}
{{s - s'}},s\ne s',\ {\rm and}\ \varphi (t,s,s)=\phi'_s(t,s),\]
and let
\[\tilde a(t,s,s') = a(t,s)\overline {a(t,s')}. \]
Then the kernel becomes
\begin{equation}\label{kernelK}K(s,s')=\int {{e^{i\lambda (s-s')\varphi(t,s,s')}}\tilde a(t,s,s')} dt. \end{equation}

If $\phi_{st}^{''}\ne 0$ on supp $a$, then by the mean value theorem, \[|\varphi'_t(t,s,s')|=|\phi''_{st}(t,s'')|\ge {\rm inf}|\phi''_{st}|,\]
where $s''$ is some number between $s$ and $s'$.
If $\lambda (s-s')\le 1$, it is easy to see that \[|K(s,s')| \le \int {|a(t,s)||a(t,s')|dt} \le {\rm diam}({\rm supp}\ a)\|a\|_\infty^2.\]For $\lambda (s-s')\ge 1$, we integrate by parts twice to see that
\[\begin{gathered}
  |K(s,s')| \le {(\lambda |s - s'|)^{ - 2}}\int {\left| {\frac{\partial }
{{\partial t}}\left( {\frac{1}
{{\varphi'_t}}\frac{\partial }
{{\partial t}}\left( {\frac{{\tilde a}}
{{\varphi'_t}}} \right)} \right)} \right|dt}  \hfill \\
   \le C{(\lambda |s - s'|)^{ - 2}}{\rm diam}({\rm supp}\ a)\frac{{{{ \Big( {\sum\limits_{0 \le i,j \le 2} {||\partial _t^i a|{|_\infty }||\partial _t^j\phi _{st}^{''}|{|_\infty }} } \Big)}^2}}}{{\inf |\phi _{st}^{''}{|^4}}},
\end{gathered}\]
where $C$ is some constant independent of $\lambda$, $a$ and $\phi$.

Hence \[|K(s,s')|\le C{\rm diam}({\rm supp}\ a)\Bigg\{\|a\|_\infty^2+\frac{{{{\Big( {\sum\limits_{0 \le i,j \le 2} {||\partial _t^i a|{|_\infty }||\partial _t^j\phi _{st}^{''}|{|_\infty }} } \Big)}^2}}}
{{\inf |\phi _{st}^{''}{|^4}}}\Bigg\}(1+\lambda |s-s'|)^{-2},\]
again $C$ is some constant independent of $\lambda$, $a$ and $\phi$.

Consequently, \[\int {|K(s,s')|ds}\le C_{a,\phi}^2\lambda^{-1},\]
which finishes the proof of the first case.

Now we prove the second part of our proposition. Assume that $\phi_{st}^{''}(t_0,s)=0$, $\phi_{stt}^{'''}(t_0,s)\ne0$ when $s\in$ supp $a(t_0,{}\cdot{})$, and $\phi_{st}^{''}(t,s)\ne0$ whenever $(t,s)\in {\rm supp}\,a\setminus \{(t,s):t=t_0\}$. We need to use the method of stationary phase.

Let $\delta >0$. Choose $\rho \in C_0^\infty(\mathbb{R})$ satisfying $\rho(t)=1, |t|\le 1$, and $\rho (t)=0, |t|\ge 2.$
Then \[\Big|\int {{e^{i\lambda (s - s')\varphi }}} \tilde a\rho ((t-t_0)/\delta )dt\Big| \le 4\delta \| a \|_\infty ^2.\]
For the remainder term with factor $1-\rho$, we integrate by parts twice to see that if $s\neq s'$,
\[\begin{gathered}
  \Big|\int {{e^{i\lambda (s - s')\varphi }}\tilde a(1 - \rho ((t - {t_0})/\delta ))dt} \Big| \hfill \\
   \le {(\lambda |s - s'|)^{ - 2}}\int\limits_{|t - {t_0}| > \delta } {\left| {\frac{\partial }
{{\partial t}}\left( {\frac{1}
{{\varphi'_t}}\frac{\partial }
{{\partial t}}\left( {\frac{{\tilde a(1 - \rho ((t - {t_0})/\delta ))}}
{{\varphi'_t}}} \right)} \right)} \right|dt}  \hfill \\
   \le C{(\lambda |s - s'|)^{ - 2}}\frac{{{{\Big( {\sum\limits_{0 \le i,j \le 2} {||\partial _t^ia|{|_\infty }||\partial _t^i\phi _{st}^{''}|{|_\infty }} } \Big)}^2}}}
{{\inf {{(|\phi _{st}^{''}|/|t - {t_0}|)}^4}}}\int\limits_{|t - {t_0}| > \delta } {(|t - {t_0}{|^{ - 4}} + {\delta ^{ - 2}}|t - {t_0}{|^{ - 2}})dt}  \hfill \\
   \le C{\delta ^{ - 3}}{(\lambda |s - s'|)^{ - 2}}\frac{{{{\Big( {\sum\limits_{0 \le i,j \le 2} {||\partial _t^ia|{|_\infty }||\partial _t^i\phi _{st}^{''}|{|_\infty }} } \Big)}^2}}}
{{\inf {{(|\phi _{st}^{''}|/|t - {t_0}|)}^4}}}, \hfill \\
\end{gathered}\]
where $C$ is a constant independent of $\lambda$, $a$ and $\phi$.

By setting $\delta=(\lambda|s-s'|)^{-\frac12}$, we get
\[|K(s,s')|\le C\Bigg\{\|a\|_\infty^2+\frac{{{{\Big( {\sum\limits_{0 \le i,j \le 2} {||\partial _t^i a|{|_\infty }||\partial _t^j\phi _{st}^{''}|{|_\infty }} } \Big)}^2}}}
{{\inf (|\phi _{st}^{''}|/|t-t_0|)^4}}\Bigg\}(\lambda |s-s'|)^{-\frac12},\ \ {\rm if}\ s\neq s'.\]
Therefore, \[\int {|K(s,s')|ds}\le C_{a,\phi}^{'2}\lambda^{-\frac12},\]
which completes the proof.
\end{proof}

\subsection{Proof of Theorem \ref{thm2}}
Noting that ${\rm diam}({\rm supp}\ a_\pm)\le2$ and we have good control on the size of $a_\pm$ and its derivatives by \eqref{amplitude}, it remains to estimate the size of $\phi_{st}^{''}$ and its derivatives. On general manifolds with nonpositive curvature, it seems difficult to get desirable bounds. However, under our assumption of constant curvature, we can compute $\phi_{st}^{''}$ and its derivatives explicitly.

Without loss of generality, we may assume that $(M,g)$ is a compact 2-dimensional Riemannian manifold with constant curvature equal to $-1$. It is well known that the universal cover of any 2-dimensional manifolds with negative constant curvature $-1$ is the hyperbolic plane $\mathbb{H}^2$. We consider the Poincar\'e half-plane model
\[\mathbb{H}^2=\{(x,y)\in \mathbb{R}^2:y>0\},\]
with the metric given by
\[ds^2=y^{-2}(dx^2+dy^2).\]
Recall that the distance function for the Poincar\'e half-plane model is given by
\[{\rm dist}((x_1,y_1),(x_2,y_2))={\rm arcosh}\left( {1{\text{ + }}\frac{{{{({x_2} - {x_1})}^2} + {{({y_2} - {y_1})}^2}}}
{{2{y_1}{y_2}}}} \right),\]
where arcosh is the inverse hyperbolic cosine function\[{\rm arcosh}(x)={\rm ln}(x+\sqrt{x^2-1}),\,x\ge1.\]

Moreover, the geodesics are the straight vertical rays orthogonal to the $x$-axis and the half-circles whose origins are on the $x$-axis. Any pair of geodesics can intersect at at most one point. Without loss of generality, we may assume that $\tilde\gamma$ is the $y$-axis. There are three possibilities for the image $\alpha(\tilde\gamma)$. It can be a straight line parallel to $\tilde\gamma$, a half-circle parallel to $\tilde\gamma$, or a half-circle intersecting $\tilde\gamma$ at one point. We need to treat these cases separately.

Let $\tilde\gamma(t)=(0,e^t),\ t\in \mathbb{R},$ be the infinite geodesic parameterized by arclength. Our unit geodesic segment is given by $\tilde\gamma(t),\ t\in[0,1].$ Then its image $\alpha(\tilde\gamma(s)),\ s\in [0,1],$ is a unit geodesic segment of $\alpha(\tilde\gamma)$.
\begin{lemma}\label{lemma31}If $\alpha  \notin {\Gamma _{{{\rm T}_R}(\tilde \gamma )}}$ and  $\alpha(\tilde\gamma)\cap\tilde\gamma=\emptyset$, then we have
\[{\rm inf}\ |\phi_{st}^{''}|\ge e^{-CT},\]
and
\[\|\phi_{st}^{''}\|_\infty+\|\phi_{stt}^{'''}\|_\infty+\|\phi_{sttt}^{''''}\|_\infty\le e^{CT},\]
where  $C>0$ is independent of $T$. The infimum and the norm are taken on the unit square $\{(t,s)\in \mathbb{R}^2:t,s\in[0,1]\}$.
\end{lemma}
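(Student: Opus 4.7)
The plan is to work in the Poincar\'e upper half-plane model, applying a hyperbolic isometry so that $\tilde\gamma(t)=(0,e^t)$. Since $\alpha(\tilde\gamma)\cap\tilde\gamma=\emptyset$, the image $\alpha(\tilde\gamma)$ must be one of: (a) a vertical line $\{x=x_0\}$ with $x_0\ne 0$; (b) a Euclidean half-circle $(x-c)^2+y^2=r^2$ with $|c|>r$ (ultraparallel to $\tilde\gamma$); or (c) a half-circle tangent to the $y$-axis (sharing an ideal endpoint on $\partial\mathbb{H}^2$). In each case I parametrize $\alpha(\tilde\gamma(s))$ by hyperbolic arclength and apply the half-plane distance formula to write $F:=\cosh\phi$ as an explicit elementary function of $(t,s)$. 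From $\phi=\operatorname{arcosh}(F)$ one then obtains the identity $\sinh^3\phi\cdot\phi_{st}''=F_{st}(F^2-1)-F F_t F_s$; higher derivatives $\phi_{stt}'''$ and $\phi_{sttt}''''$ follow by further differentiating the resulting formula.

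The point is that in each case the right-hand side of the above identity collapses to a clean closed form. In (a), $\phi_{st}''=-x_0^2/\bigl(y_0 e^{s+t}\sinh^3\phi\bigr)$. In (b), writing $\tau=t-t_0$ and $\mu=s-s_0$ where $(t_0,s_0)$ are the parameter values of the feet of the common perpendicular (of length $\ell$), the distance takes the normal form $F=\cosh\ell\cosh\tau\cosh\mu+\sinh\tau\sinh\mu$; the Wronskian identity $F F_{st}-F_tF_s=\cosh\ell$ then telescopes to $\phi_{st}''=\sinh^2\ell\cdot\cosh\tau\cosh\mu/\sinh^3\phi$. In (c), a hyperbolic dilation centered at the shared ideal endpoint yields $\phi_{st}''=e^{t-s-s_1}/(2\sinh^3\phi)$ for a shift $s_1$ determined by $\alpha$. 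On the region $\phi\ge 2$ both $|\phi_t|$ and $\coth\phi$ are bounded by $\coth 2=O(1)$, so each additional $t$-derivative introduces at most an $O(1)$ factor together with polynomial factors in $\cosh\phi$. Combined with $\sinh\phi,\cosh\phi\le e^T$ (from $\phi\le T$, a consequence of Huygens and the support of $\hat\chi$), this gives the upper bound $\|\phi_{st}''\|_\infty+\|\phi_{stt}'''\|_\infty+\|\phi_{sttt}''''\|_\infty\le e^{CT}$ at once.

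For the lower bound, $\sinh^3\phi\le e^{3T}$ reduces matters to showing each numerator is $\gtrsim e^{-CT}$. Cases (a) and (c) are straightforward: the displacement bound $\phi(0,0)\le T$ confines the auxiliary parameters (namely $y_0$ in (a), the shift $s_1$ in (c)) to an exponential window, and the tube hypothesis $\alpha\notin\Gamma_{{\rm T}_R(\tilde\gamma)}$ pins $|x_0|\gtrsim y_0\sinh R$ in (a) (with an analogous constraint in (c)), yielding the desired $e^{-CT}$. The delicate subcase is (b) when $\ell=\ell(\alpha)$ is small: here the constraint $\phi\ge 2$ on $[0,1]^2$ cannot be met at the foot of the common perpendicular (where $\phi=\ell$), so $(t_0,s_0)$ must lie outside $[0,1]^2$, and the forced growth of $\cosh\tau\cosh\mu$ on $[0,1]^2$ partially compensates the small factor $\sinh^2\ell$. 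Balancing these factors against $\sinh^3\phi\le e^{3T}$, and invoking the discreteness of $\Gamma$ together with the compactness of $M$ to guarantee $\cosh\ell-1\gtrsim e^{-CT}$ uniformly over $\alpha$ with $d_{\tilde g}(\tilde\gamma(0),\alpha(\tilde\gamma(0)))\le T$, closes the argument.
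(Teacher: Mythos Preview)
Your normal-form approach is elegant and genuinely different from the paper's brute-force half-plane computation: the identity $\phi_{st}''=\sinh^2\ell\,\cosh\tau\cosh\mu/\sinh^3\phi$ in the ultraparallel case is correct and makes the structure transparent, and your upper-bound argument (each extra $t$-derivative costing only $O(1)$ factors times powers of $\cosh\phi\le e^T$) is cleaner than the paper's explicit differentiation of rational expressions.

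The gap is in the lower bound for case (b). You drop the tube hypothesis $\alpha\notin\Gamma_{{\rm T}_R(\tilde\gamma)}$ there and try to replace it by ``discreteness of $\Gamma$ together with compactness of $M$'' to force $\cosh\ell-1\gtrsim e^{-CT}$. That inference is not valid: $\ell=\ell(\alpha)$ is the distance between the geodesics $\tilde\gamma$ and $\alpha(\tilde\gamma)$, not a translation length, and for a generic (non-closed) $\gamma$ these distances can be arbitrarily small as $T$ grows---discreteness alone gives no quantitative floor. Nor does the ``forced growth of $\cosh\tau\cosh\mu$'' compensate: if $(t_0,s_0)$ sits just outside $[0,1]^2$ then $\cosh\tau\cosh\mu$ is only $O(1)$ on the square, while $\sinh^2\ell$ may be tiny. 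In the paper's coordinates your numerator is exactly
\[
\sinh^2\ell\,\cosh\tau\cosh\mu=\frac{(e^{2t}+a^2-r^2)\bigl((a-r)e^{2s}+(a+r)\bigr)}{4r^2e^{s+t}},
\]
and without further input this is only $\gtrsim r^{-2}(\cosh T)^{-1}$; but nothing you have written bounds the Euclidean radius $r$ of $\alpha(\tilde\gamma)$ by $e^{CT}$.

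The fix is precisely what the paper does: use $\alpha\notin\Gamma_{{\rm T}_R(\tilde\gamma)}$ to obtain the dichotomy $r\le C\cosh T$ \emph{or} $|a-r|\ge (C\cosh T)^{-1}$ (equivalently, in your language, that the relevant unit arc of $\alpha(\tilde\gamma)$ stays at distance $\gtrsim R$ from $\tilde\gamma$, which forces $\sinh\ell\,\cosh\mu\gtrsim\sinh R$ along it). Either branch then closes your estimate. You already invoke the tube hypothesis in cases (a) and (c); it is equally indispensable in (b).
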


\begin{lemma}\label{lemma33}Let $\alpha  \notin {\Gamma _{{{\rm T}_R}(\tilde \gamma )}}$ and $\alpha(\tilde\gamma)$ is a half-circle intersecting $\tilde\gamma$ at the point $(0,e^{t_0}),\ t_0\in \mathbb{R}$. 

If $t_0\notin[-1,2]$, then the intersection point $(0,e^{t_0})$ is outside some neighbourhood of the unit geodesic segment $\{\tilde \gamma(t):\,t\in[0,1]\}$. We have
\[{\rm inf}\ |\phi_{st}^{''}|\ge e^{-CT},\]
and
\[\|\phi_{st}^{''}\|_\infty+\|\phi_{stt}^{'''}\|_\infty+\|\phi_{sttt}^{''''}\|_\infty\le e^{CT},\]
where  $C>0$ is independent of $T$.

On the other hand, if $t_0\in[0,1]$, we have
\[{\rm inf}\ |\phi_{st}^{''}/(t-t_0)|\ge e^{-CT},\]
and
\[\|\phi_{st}^{''}\|_\infty+\|\phi_{stt}^{'''}\|_\infty+\|\phi_{sttt}^{''''}\|_\infty\le e^{CT},\]
where  $C>0$ is independent of $T$. The infima and the norms are taken on the unit square $\{(t,s)\in \mathbb{R}^2:t,s\in[0,1]\}$.
\end{lemma}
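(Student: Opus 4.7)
The plan is to exploit the constant curvature assumption and compute $\phi(t,s) = d_{\tilde g}(\tilde\gamma(t),\alpha(\tilde\gamma(s)))$ in closed form via the hyperbolic law of cosines, then differentiate directly. Let $\theta \in (0,\pi)$ denote the angle at which $\alpha(\tilde\gamma)$ crosses $\tilde\gamma$ at the point $P = (0,e^{t_0})$, and let $s_0$ be the arclength parameter with $\alpha(\tilde\gamma(s_0)) = P$. Setting $u = t-t_0$ and $v = s-s_0$, the hyperbolic law of cosines yields
\[
\cosh\phi(t,s) \;=\; \cosh u\,\cosh v - \sinh u\,\sinh v\cos\theta.
\]
Differentiating once in $s$ and once in $t$, and simplifying using $\cosh^2\phi-\sinh^2\phi=1$, one arrives at the clean identity
\[
\phi_{st}''(t,s) \;=\; -\,\frac{\sinh u \,\sinh v \,\sin^2\theta}{\sinh^3\phi(t,s)}.
\]
The higher mixed derivatives $\phi_{stt}'''$ and $\phi_{sttt}''''$ follow by continued differentiation, using also $\sinh\phi\cdot\phi_t = \sinh u\cosh v - \cosh u\sinh v\cos\theta$ and the universal bound $|\phi_t|\le 1$.

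The geometric input is extracted from the hyperbolic right triangle formed by $P$, the point $\alpha(\tilde\gamma(s))$, and the foot of its perpendicular onto $\tilde\gamma$. Its right-angle identity reads
\[
\sinh\bigl(d_{\tilde g}(\alpha(\tilde\gamma(s)),\tilde\gamma)\bigr) \;=\; \sin\theta \cdot \sinh|s-s_0|.
\]
Since $\alpha \notin \Gamma_{T_R(\tilde\gamma)}$ the left side is at least $\sinh R$, while the Huygens/triangle bounds $|u|,|v|\le T+O(1)$ (coming from \eqref{phileT}) force the key estimate $\sin\theta \ge Ce^{-T}$. Moreover, if the image segment stays at distance $\ge R$ from $\tilde\gamma$, then $P\in\tilde\gamma$ cannot lie on the image segment, so automatically $s_0\notin[0,1]$ and $|v|\ge R$ throughout the unit square.

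With these ingredients in hand, the two cases of the lemma follow quickly. When $t_0\notin[-1,2]$ one has $|u|\ge 1$, hence $|\sinh u|\ge\sinh 1$; combining the trivial upper bound $\sinh^3\phi\le e^{CT}$ with $|\sinh v|\sin^2\theta \ge (\sin\theta\cdot|\sinh v|)\sin\theta \ge Ce^{-T}\sinh R$ yields $|\phi_{st}''|\ge e^{-CT}$. When $t_0\in[0,1]$ the factor $\sinh u/u$ is uniformly $\ge 1$ on $|u|\le 1$, so factoring $(t-t_0)$ out of $\phi_{st}''$ gives $|\phi_{st}''/(t-t_0)| \ge e^{-CT}$ by the same mechanism. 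The upper bounds on $\|\phi_{st}''\|_\infty+\|\phi_{stt}'''\|_\infty+\|\phi_{sttt}''''\|_\infty$ then read off the explicit formulas, with $\sinh\phi\ge\sinh 2$ from \eqref{phileT} controlling denominators and $|\sinh x|,|\cosh x|\le e^T$ controlling numerators.

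The main technical obstacle is extracting $\sin\theta\gtrsim e^{-T}$ from the geometric constraint alone: when $\alpha(\tilde\gamma)$ crosses $\tilde\gamma$ at a very small angle, the image segment tracks $\tilde\gamma$ over a long hyperbolic distance before escaping the tube, and the precise exponential dependence requires careful use of the right-triangle identity together with the Huygens upper bound $|v|\le T+O(1)$. One must also verify that the extra $\sinh|v|$ factors appearing in the higher $t$-derivatives do not spoil the $e^{CT}$ upper bounds.
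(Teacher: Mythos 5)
Your central algebraic ingredient — the law-of-cosines expansion $\cosh\phi = \cosh u\cosh v - \sinh u\sinh v\cos\theta$ with $u=t-t_0$, $v=s-s_0$, and the resulting clean identity
\[
\phi_{st}''(t,s)\;=\;-\,\frac{\sinh u\,\sinh v\,\sin^2\theta}{\sinh^3\phi}\;=\;-\,\frac{\sinh\bigl(d_{\tilde g}(\tilde\gamma(t),\alpha(\tilde\gamma))\bigr)\,\sinh\bigl(d_{\tilde g}(\alpha(\tilde\gamma(s)),\tilde\gamma)\bigr)}{\sinh^3\phi}
\]
— is correct and genuinely different from the paper, which instead computes $\phi$ and its derivatives explicitly in half-plane coordinates (see (\ref{phicircle})--(\ref{phist})) and runs a case analysis on $r$ and $|r-a|$. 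Your formula is cleaner, the factorization into two perpendicular-distance factors is exactly the right way to see the cancellation, and if the lower bounds were filled in correctly it would give a shorter proof.

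The gap is in the step ``the Huygens/triangle bounds $|u|,|v|\le T+O(1)$ $\ldots$ force the key estimate $\sin\theta\ge Ce^{-T}$.'' The triangle inequality only gives $|v|\le\phi+|u|\le T+|u|$, and there is no a priori bound on $|u|=|t-t_0|$: the crossing parameter $t_0$ is not constrained to lie within $O(T)$ of $[0,1]$. In fact, in the half-plane picture one sees that as $a/r\to 1$ the intersection height $e^{t_0}=\sqrt{r^2-a^2}\to 0$ and simultaneously $s_0\to+\infty$ and $\sin\theta=\sqrt{1-(a/r)^2}\to 0$, all the while the Huygens constraint (\ref{poly}) and the tube exclusion can remain satisfied; so $\sin\theta\ge Ce^{-T}$ is simply not a consequence of the hypotheses. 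What the hypotheses \emph{do} control is not $\sin\theta$ but the two products $\sin\theta\sinh|v|=\sinh d_{\tilde g}(\alpha(\tilde\gamma(s)),\tilde\gamma)\ge\sinh R$ (from the tube condition) and $\sin\theta\sinh|u|=\sinh d_{\tilde g}(\tilde\gamma(t),\alpha(\tilde\gamma))$ — and you never supply a lower bound for the latter, which does not follow from $|u|\ge 1$ alone since $\sin\theta$ may be tiny. This is precisely where the paper's explicit computation does work: the constraints (\ref{segment2})--(\ref{a-rbound}) together with the tube dichotomy (\ref{claim32}) control $\sin\theta\sinh|u|$ indirectly through bounds on $r$ and $|a-r|$. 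The same issue infects your $t_0\in[0,1]$ case and your upper bounds: you invoke $|\sinh u|,|\sinh v|\le e^T$, but those too presuppose $|u|,|v|\lesssim T$. (For $\phi_{st}''$ itself the upper bound actually follows trivially from your identity — the two distance factors are each $\le\sinh\phi$, so $|\phi_{st}''|\le 1/\sinh\phi\le 1/\sinh 2$ — but for $\phi_{stt}'''$ and $\phi_{sttt}''''$ the extra $\cosh u$, $\cosh v$, $\coth u$, $\coth v$ factors do need to be controlled, and your justification does not do it.) Until $\sin\theta\sinh|u|\ge e^{-CT}$ is established from the geometric hypotheses, the lower bound argument is incomplete.
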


We shall postpone the proof of Lemma \ref{lemma31} and Lemma \ref{lemma33} to the last section. Now we see first how to finish the proof of Theorem \ref{thm2} using Lemma \ref{lemma31} and Lemma \ref{lemma33}.
\begin{proof}[Proof of Theorem \ref{thm2}]
By \eqref{K_0} and \eqref{stube}, we only need to show that
\begin{equation}\label{sosc}\|S_\lambda^{osc}h\|_{L^4([0,1])}\le e^{CT}\lambda^{\frac38}\|h\|_{L^{\frac43}([0,1])},\end{equation}
where $C$ is independent of $T$.

Recall that the number of nonzero summands in $S_\lambda^{osc}$ is $O(e^{CT})$.

Let $\alpha \notin \Gamma_{{\rm T}_R}(\tilde \gamma)$. If $\alpha(\tilde\gamma)\cap\tilde\gamma=\emptyset$, by Proposition \ref{oscint}, Lemma \ref{lemma31} and the condition on the amplitude (\ref{amplitude}), we have
\[\|S_\lambda^{osc}h\|_{L^2([0,1])}\le e^{CT}\|h\|_{L^2([0,1])}.\]

Assume that $\alpha(\tilde\gamma)$ intersects $\tilde\gamma$ at the point $\tilde\gamma(t_0)$. Since $\alpha  \notin {\Gamma _{{{\rm T}_R}(\tilde \gamma )}}$, the intersection point cannot lie on the unit geodesic segment $\alpha(\tilde \gamma(s)),\ s\in[0,1]$. Thus, by Proposition \ref{oscint}, Lemma \ref{lemma33} and (\ref{amplitude}) we obtain
\[\|S_\lambda^{osc}h\|_{L^2([0,1])}\le e^{CT}\|h\|_{L^2([0,1])},\ {\rm if}\ t_0\notin[-1,2],\]
and
\[\|S_\lambda^{osc}h\|_{L^2([0,1])}\le e^{CT}\lambda^{\frac14}\|h\|_{L^2([0,1])},\ {\rm if}\ t_0\in[0,1].\]

If $t_0\in (1,2]$, we can extend the interval $[0,1]$ to $[0,2]$. Since
\[\Big|\int_{ 0}^{1}\Big|\le \Big|\int_{ 0}^{2}\Big|+\Big|\int_{ 1}^{2}\Big|,\]
we can see from (\ref{kernelK}) that it is reduced to the second case of Proposition \ref{oscint}  and Lemma \ref{lemma33}. It is similar for $t_0\in [-1,0)$. Thus we have
\[\|S_\lambda^{osc}h\|_{L^2([0,1])}\le e^{CT}\lambda^{\frac14}\|h\|_{L^2([0,1])},\ {\rm if}\ t_0\in[-1,0)\cup (1,2].\]

Consequently, for $\lambda >1$ we always have
\[\|S_\lambda^{osc}h\|_{L^2([0,1])}\le e^{CT}\lambda^{\frac14}\|h\|_{L^2([0,1])}.\]
By interpolating with the trivial $L^1 \to L^\infty$ bound, we obtain (\ref{sosc}), finishing the proof.
\end{proof}
\subsection{Proof of Lemmas}
Before proving the lemmas, we remark that in the Poincar\'e half-plane model
\[{{\rm T}_R}(\tilde\gamma)=\{(x,y)\in \mathbb{R}^2: y>0\ {\rm and}\ y\ge |x|/\sqrt{({\rm cosh}R)^2-1}\}.\]
Indeed, the distance between $(0,e^t)$ and $(x,y)$, $y>0$, is \[f(t)={\rm arcosh}\Big(1+\frac{x^2+(y-e^t)^2}{2ye^t}\Big)={\rm arcosh}\Big(\frac{x^2+y^2+e^{2t}}{2ye^t}\Big).\]
Setting $f'(t)=0$ gives $t={\rm ln}\sqrt{x^2+y^2}$, which must be the only minimum point. Thus the distance between $(x,y)$ and the infinite geodesic $\tilde\gamma$ is \[{\rm dist}((x,y),\tilde\gamma)={\rm arcosh}(\sqrt{1+(x/y)^2}).\]
Since ${\rm dist}((x,y),\tilde\gamma)\le R$ in ${{\rm T}_R}(\tilde\gamma)$, it follows that $y\ge |x|/\sqrt{({\rm cosh}R)^2-1}$.

From now on, we shall always parametrize $\tilde\gamma$ and $\alpha(\tilde\gamma)$ by arc-length, denoted by $\gamma_1(t)$ and $\gamma_2(s)$ respectively. The explicit expressions for the corresponding segments that we concern will be given in the proof case by case.
\begin{proof}[Proof of Lemma \ref{lemma31}]
Note that in this case, the image $\alpha(\tilde\gamma)=\gamma_2$ can be either a straight line or a half-circle parallel to $\tilde\gamma=\gamma_1$. We treat these two cases separately.

\begin{figure}
  \centering
    \includegraphics[height=8cm]{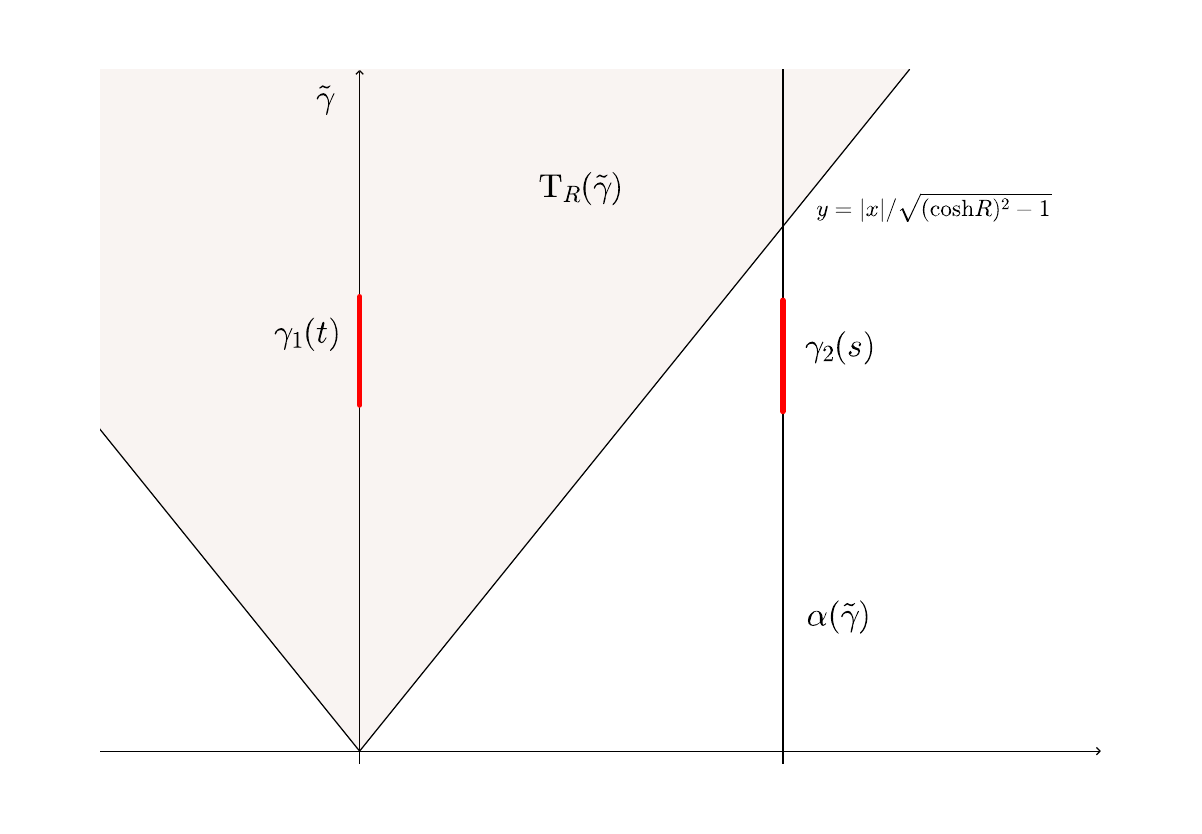}
\caption{$\alpha(\tilde\gamma)$ is a line parallel to $\tilde\gamma$.}
  \label{1}
\end{figure}

Let $\gamma_1(t)=(0,e^t), t\in[0,1]$, $\gamma_2(s)=(a,e^s)$ be the two unit geodesic segments, where $a\in \mathbb{R}$ and $s$ is in some unit closed interval of $\mathbb{R}$. See Figure 1.

The distance function is
\[\phi(t,s)={\rm dist}(\gamma_1(t),\gamma_2(s))={\rm arcosh}\Big(1+\frac{a^2+(e^s-e^t)^2}{2e^{s+t}}\Big)={\rm arcosh}\Big(\frac{a^2+e^{2t}+e^{2s}}{2e^{t+s}}\Big).\]
Then we have
\[\phi_{st}^{''}(t,s)=\frac{-8e^{2s+2t}a^2}{((a^2+e^{2t}+e^{2s})^2-4e^{2s+2t})^{3/2}}.\]

By (\ref{phileT}), we have $\phi \le T$. Thus \[a^2e^{-t-s}+e^{t-s}+e^{s-t}\le 2{\rm cosh}T,\]which gives $s\in [-T,T+1]$ and $|a|\le Ce^T$. Here $C$ is independent of $T$.

To get the lower bound of $|\phi_{st}^{''}|$, we need to use the condition that $\alpha  \notin {\Gamma _{{{\rm T}_R}(\tilde \gamma )}}$.

We claim that \begin{equation}\alpha  \notin {\Gamma _{{{\rm T}_R}(\tilde \gamma )}}\Rightarrow \ |a|\ge Ce^{-T},\end{equation} where $C$ is independent of $T$. Note that if the segment $\gamma_2(s), s\in [-T,T+1]$ is completely included in ${{\rm T}_R}(\tilde \gamma )$, then we must have $\alpha  \in {\Gamma _{{{\rm T}_R}(\tilde \gamma )}}$, meaning that \[e^{-T}\ge |a|\sqrt{({\rm cosh}R)^2-1}\ \Rightarrow \ \alpha  \in {\Gamma _{{{\rm T}_R}(\tilde \gamma )}},\] which implies our claim.

Consequently,
\[|\phi_{st}^{''}|\ge C\frac{e^{-2T}e^{-2T}}{e^{6T}}=Ce^{-10T}.\]
This gives the lower bound of $|\phi_{st}^{''}|$.

The upper bounds can be estimated similarly. Note that
\[(a^2+e^{2t}+e^{2s})^2-4e^{2s+2t}\ge a^2 \ge Ce^{-2T}.\]

We have
\[|\phi_{st}^{''}|\le C\frac{e^{2T}e^{2T}}{e^{-{3T}}}\le Ce^{7T}.\]

Moreover, \[\phi_{stt}^{'''}=\frac{-16a^2e^{2s+2t}((a^2+e^{2s})^2+e^{2s+2t}-a^2e^{2t}-2e^{4t})}{((a^2+e^{2t}+e^{2s})^2-4e^{2s+2t})^{5/2}},\]
and
\[\phi_{sttt}^{''''}=\frac{-32a^2 e^{2s+2t}((a^2+e^{2s})^4+{\rm lower\ order\   terms})}{((a^2+e^{2t}+e^{2s})^2-4e^{2s+2t})^{7/2}}.\]
 Thus
\[|\phi_{stt}^{'''}|\le C\frac{e^{2T}e^{2T}e^{4T}}{e^{-5T}}\le Ce^{13T},\]
and \[|\phi_{sttt}^{''''}|\le C\frac{e^{2T}e^{2T}e^{8T}}{e^{-7T}}\le Ce^{19T}.\]
This completes the proof of the first case.

Now we turn to the case when $\gamma_2$ is a half-circle centered at $(a,0)$ with radius $r>0$. See Figure 2. Let $\gamma_1(t)=(0,e^t), t\in[0,1]$, $\gamma_2(s)=(a+r\frac{1-e^{2s}}{1+e^{2s}},\frac{2re^s}{1+e^{2s}})$ be two unit geodesic segments, where $|a|\ge r>0$ and $s$ is in some unit closed interval of $\mathbb{R}$. Without loss of generality, we may only consider the case $a\ge r>0$. Then the distance function is
\begin{equation}\label{phicircle}\phi(t,s)={\rm dist}(\gamma_1(t),\gamma_2(s))= {\rm arcosh}\Big( \frac{A}{4re^{s+t}}\Big),\end{equation}
where \begin{equation}\label{capitala}A=e^{2s+2t}+(a-r)^2e^{2s}+e^{2t}+(a+r)^2.\end{equation}

\begin{figure}
  \centering
    \includegraphics[height=8cm]{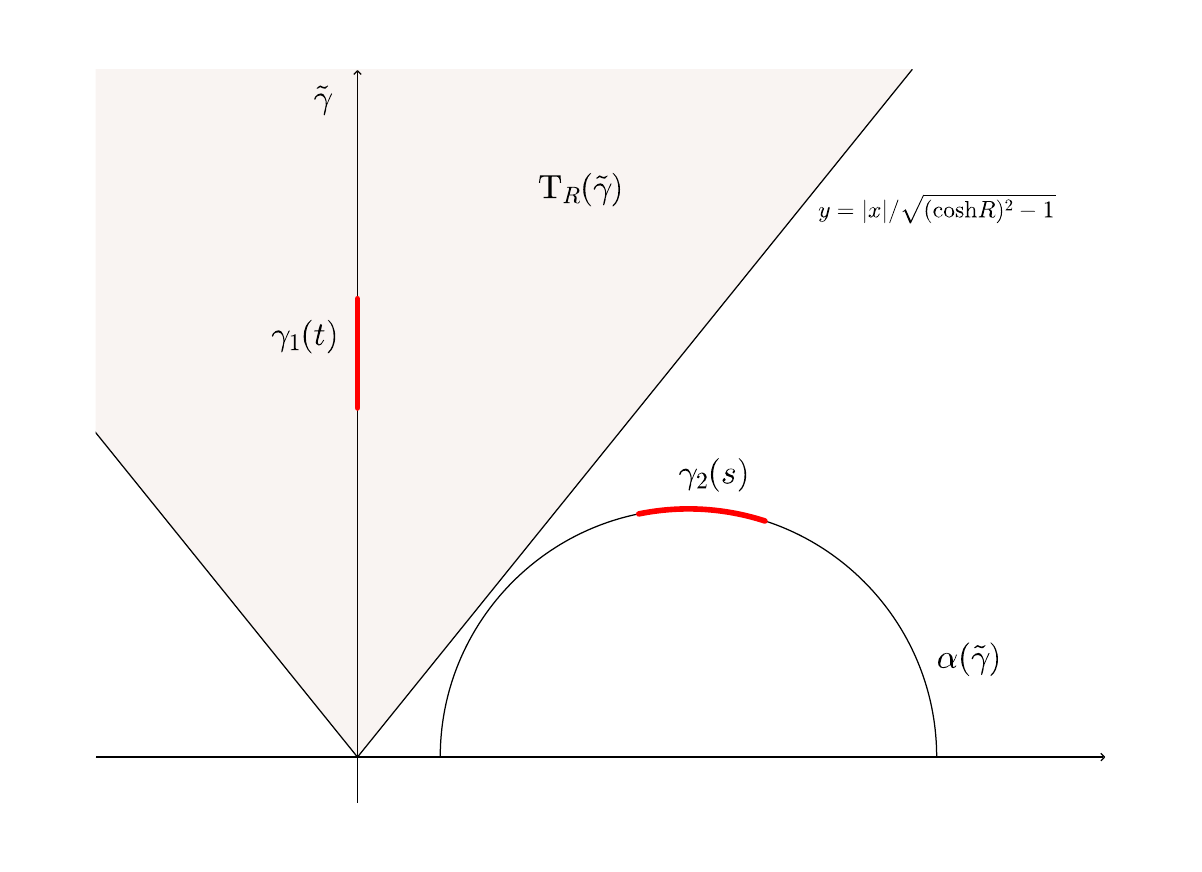}
\caption{$\alpha(\tilde\gamma)$ is a half-circle parallel to $\tilde\gamma$.}
  \label{2}
\end{figure}

Thus we have
\begin{equation} \label{phist} \phi_{st}^{''}=\frac{16re^{2s+2t}(a+r+(a-r)e^{2s})(a^2-r^2+e^{2t})}{(A^2-16r^2e^{2s+2t})^{3/2}}.\end{equation}

Again by (\ref{phileT}), we get $\phi\le T$. Namely,
\begin{equation}\label{poly}(e^{2t}+(a-r)^2)e^{2s}-4r({\rm cosh}T)e^te^s+e^{2t}+(a+r)^2\le 0,\end{equation}
which implies
\begin{equation}\label{segment2}\frac{r}{4{\rm cosh}T}\le e^s\le 4r{\rm cosh}T.\end{equation}
Moreover, note that if we view the left hand side of \eqref{poly} as a quadratic polynomial in terms of $e^s$, then the discriminant has to be nonnegative:
\[16r^2({\rm cosh}T)^2e^{2t}-4(e^{2t}+(a-r)^2)(e^{2t}+(a+r)^2)\ge0,\]
we obtain that
\begin{equation}\label{a/rbound}\frac{a}{r}\le 2e{\rm cosh}T,\end{equation}
and
\begin{equation}\label{a-rbound}|a-r|\le 2e {\rm cosh}T.\end{equation}

To get the lower bound of $|\phi_{st}^{''}|$, we need to use the condition that $\alpha  \notin {\Gamma _{{{\rm T}_R}(\tilde \gamma )}}$.

We claim that there exists some constant $C$ independent of $T$ such that
\begin{equation}\label{claim32} \alpha  \notin {\Gamma _{{{\rm T}_R}(\tilde \gamma )}} \Rightarrow r\le C{\rm cosh}T\ {\rm or}\ |a-r|\ge \frac1{C{\rm cosh}T}.\end{equation}

In fact, if the segment $\gamma_2(s), s\in [-{\rm ln}(4r^{-1}{\rm cosh}T),{\rm ln}(4r{\rm cosh}T)]$ is completely included in ${{\rm T}_R}(\tilde \gamma )$, then we must have $\alpha  \in {\Gamma _{{{\rm T}_R}(\tilde \gamma )}}$. By some basic calculations, we can see that
\begin{equation}\label{intube}\{\gamma_2(s):s\in \mathbb{R}\} \cap {{\rm T}_R}(\tilde \gamma )=\{\gamma_2(s):(a/r-1)e^{2s}-2e^s\sqrt{({\rm cosh}R)^2-1}+a/r+1\le 0\}.\end{equation}

If $a\ne r$ and $a/r\le {\rm cosh}R$, the RHS of (\ref{intube}) becomes
\begin{equation}\label{condit1}\{\gamma_2(s):u_- \le e^s \le u_+\},\end{equation}
where \begin{equation}\label{condit2}u_\pm=\frac{\sqrt{({\rm cosh}R)^2-1}\pm \sqrt{({\rm cosh}R)^2-(a/r)^2}}{a/r-1}.\end{equation}
Note that  $u_- \le \frac{r}{4{\rm cosh}T}$ and $4r{\rm cosh}T \le u_+$ imply $\alpha  \in {\Gamma _{{{\rm T}_R}(\tilde \gamma )}}$.
 We have
\begin{equation}\label{contra}r \ge 4\sqrt{\frac{{\rm cosh}R+1}{{\rm cosh}R-1}}{\rm cosh}T\ {\rm and}\ |a-r|\le \frac{\sqrt{({\rm cosh}R)^2-1}}{4{\rm cosh}T} \Rightarrow \alpha  \in {\Gamma _{{{\rm T}_R}(\tilde \gamma )}}.\end{equation}
If $a=r$, similarly we have
\[r \ge \frac{4{\rm cosh}T}{\sqrt{({\rm cosh}R)^2-1}} \Rightarrow \alpha  \in {\Gamma _{{{\rm T}_R}(\tilde \gamma )}},\]
which finishes the proof of our claim.

Therefore, by (\ref{claim32}) we have to consider two cases and estimate $|\phi_{st}^{''}|$ respectively.
We note that by $\phi\le T$,\begin{equation}\label{phitimes}|\phi_{st}^{''}|\ge |\phi_{st}^{''}|\left( \frac{A}{4re^{s+t}{\rm cosh}T}\right)^2\ge \frac{|a+r+(a-r)e^{2s}||a^2-r^2+e^{2t}|}{({\rm cosh}T)^2rA}.\end{equation}

(I) Assume $r\le C {\rm cosh}T$.

If $a-r\ge 1$, then by (\ref{a/rbound}), we get $r\ge (2e{\rm cosh}T)^{-1}$. Thus, we obtain
\[|\phi_{st}^{''}|\ge \frac{C}{({\rm cosh}T)^2}\frac{(a-r)^2(a+r)r^2({\rm cosh}T)^{-2}}{r(r^2(a-r)^2({\rm cosh}T)^2)}\ge Ce^{-6T}.\]

If $a-r\le1$ and $r\ge ({\rm cosh}T)^{-1}$, following (\ref{phitimes}) we have
\[|\phi_{st}^{''}|\ge \frac{C}{({\rm cosh}T)^2}\frac{a+r}{r(r^2({\rm cosh}T)^2)}\ge Ce^{-6T}.\]

If $a-r\le1$ and $r\le ({\rm cosh}T)^{-1}$, again by (\ref{phitimes}) we get
\[|\phi_{st}^{''}|\ge \frac{C}{({\rm cosh}T)^2}\frac{a+r}{r}\ge Ce^{-2T}.\]

(II) Assume $a-r\ge \frac1{C {\rm cosh}T}$. We may assume $r\ge 1$, otherwise it is reduced to the first case.

If $a-r\ge 1$, then
\[|\phi_{st}^{''}|\ge \frac{C}{({\rm cosh}T)^2}\frac{(a+r)(a-r)^2r^2({\rm cosh}T)^{-2}}{r((a-r)^2r^2({\rm cosh}T)^2)}\ge Ce^{-6T}.\]

If $a-r\le 1$ then
\[|\phi_{st}^{''}|\ge \frac{C}{({\rm cosh}T)^2}\frac{(a+r)(a-r)^2r^2({\rm cosh}T)^{-2}}{r(r^2({\rm cosh}T)^2)}\ge Ce^{-8T}.\]

Note that the constant $C$ is independent of $T$. Hence we finish the proof of the lower bound of $|\phi_{st}^{''}|$.

The upper bounds can be obtained in a similar fashion.
Direct computations give
\begin{equation}\label{phistt}\phi_{stt}^{'''}=\frac{-32re^{2s+2t}(a+r+(a-r)e^{2s})((a+r)(a-r)^5e^{4s}+{\rm lower\ order\ terms})}{(A^2-16r^2e^{2s+2t})^{5/2}}\end{equation}

and
\begin{equation}\label{phisttt}\phi_{sttt}^{''''}=\frac{-64re^{2s+2t}(a+r+(a-r)e^{2s})((a+r)(a-r)^9e^{8s}+{\rm lower\ order\ terms})}{(A^2-16r^2e^{2s+2t})^{7/2}}.\end{equation}

(I) Assume $r\le C {\rm cosh}T$.
Observe that \[A-4re^{s+t}\ge \frac{(e^{2t}+a^2-r^2)^2}{e^{2t}+(a-r)^2}\ge e^{2t}\ge 1,\]
we have \[A^2-16r^2e^{2s+2t}\ge 1.\]
Then
\[|\phi_{st}^{''}|\le C({\rm cosh}T)({\rm cosh}T)^4({\rm cosh}T)^5({\rm cosh}T)^2\le Ce^{12T},\]
\[|\phi_{stt}^{'''}|\le C({\rm cosh}T)({\rm cosh}T)^4({\rm cosh}T)^5({\rm cosh}T)^{14}\le Ce^{24T},\]
and
\[|\phi_{stt}^{'''}|\le C({\rm cosh}T)({\rm cosh}T)^4({\rm cosh}T)^5({\rm cosh}T)^{26}\le Ce^{36T}.\]

(II) Assume $|a-r|\ge \frac1{C {\rm cosh}T}$. We may assume $r\ge {\rm cosh}T$, otherwise it is reduced to the first case. Note that
 \[\begin{aligned}
  {A^2} - 16{r^2}{e^{2s + 2t}} &\ge ({({e^{2t}} + {(C{\text{cosh}}T)^{ - 2}}{e^{2s}} + 4{r^2})^2} - 16{r^2}{e^{2s + 2t}}\\
   &\ge {(C{\text{cosh}}T)^{ - 2}}{e^{4s}} + {({e^{2s + 2t}} - 4{r^2})^2}. \end{aligned}\]
We have \begin{equation}\label{estimateA} A^2-16r^2e^{2s+2t}\ge C({\rm cosh}T)^{-6}r^4.\end{equation}
Thus
\[|\phi_{st}^{''}|\le \frac{Cr^3({\rm cosh}T)^2(r^2({\rm cosh}T)^3)(r{\rm cosh}T)}{(({\rm cosh}T)^{-6}r^4)^{3/2}}\le Ce^{15T},\]
\[|\phi_{stt}^{'''}|\le \frac{Cr^3({\rm cosh}T)^2(r^2({\rm cosh}T)^3)(r^5({\rm cosh}T)^9)}{(({\rm cosh}T)^{-6}r^4)^{5/2}}\le Ce^{29T},\]
and
\[|\phi_{sttt}^{''''}|\le \frac{Cr^3({\rm cosh}T)^2(r^2({\rm cosh}T)^3)(r^9({\rm cosh}T)^{17})}{(({\rm cosh}T)^{-6}r^4)^{7/2}}\le Ce^{43T}.\]
Since the constant $C$ is independent of $T$, the proof is complete.
\end{proof}

\begin{proof}[Proof of Lemma \ref{lemma33}]
Let $\gamma_1(t)=(0,e^t), t\in[0,1]$, $\gamma_2(s)=(a+r\frac{1-e^{2s}}{1+e^{2s}},\frac{2re^s}{1+e^{2s}})$ be two unit geodesic segments, where $r>|a|\ge 0$ and $s$ is in some unit closed interval of $\mathbb{R}$. Without loss of generality, we may only consider the case $r>a\ge 0$. The expressions of the distance function $\phi$ and its derivatives are the same as in (\ref{phicircle})-(\ref{phist}),(\ref{phistt}) and (\ref{phisttt}).  (\ref{segment2})-(\ref{phitimes}) also hold in this case.

The zero set of $\phi_{st}^{''}(t,s)$ is
\[\Big\{(t,s)\in \mathbb{R}^2: t=t_0\ {\rm or}\ s=s_0,\ {\rm where}\ e^{2t_0}=r^2-a^2\  {\rm and}\  e^{2s_0}=\frac{r+a}{r-a}\Big\}.\]

It is not difficult to check that the point $p=\gamma_1(t_0)=\gamma_2(s_0)$ is the intersection point of the two infinite geodesics $\gamma_1$ and $\gamma_2$. If the unit geodesic segment $\gamma_2(s)$ passes through the intersection point, then this segment must be contained in ${\rm T}_R(\tilde \gamma)$, thus our geodesic segment $\gamma_2(s)$ cannot pass through the point $p$. 

We need to consider the following two cases: (I) $t_0\in[0,1]$, (II) $t_0\notin[-1.2]$.

(I) Assume $t_0\in [0,1]$. See Figure 3.

\begin{figure}
  \centering
    \includegraphics[height=8cm]{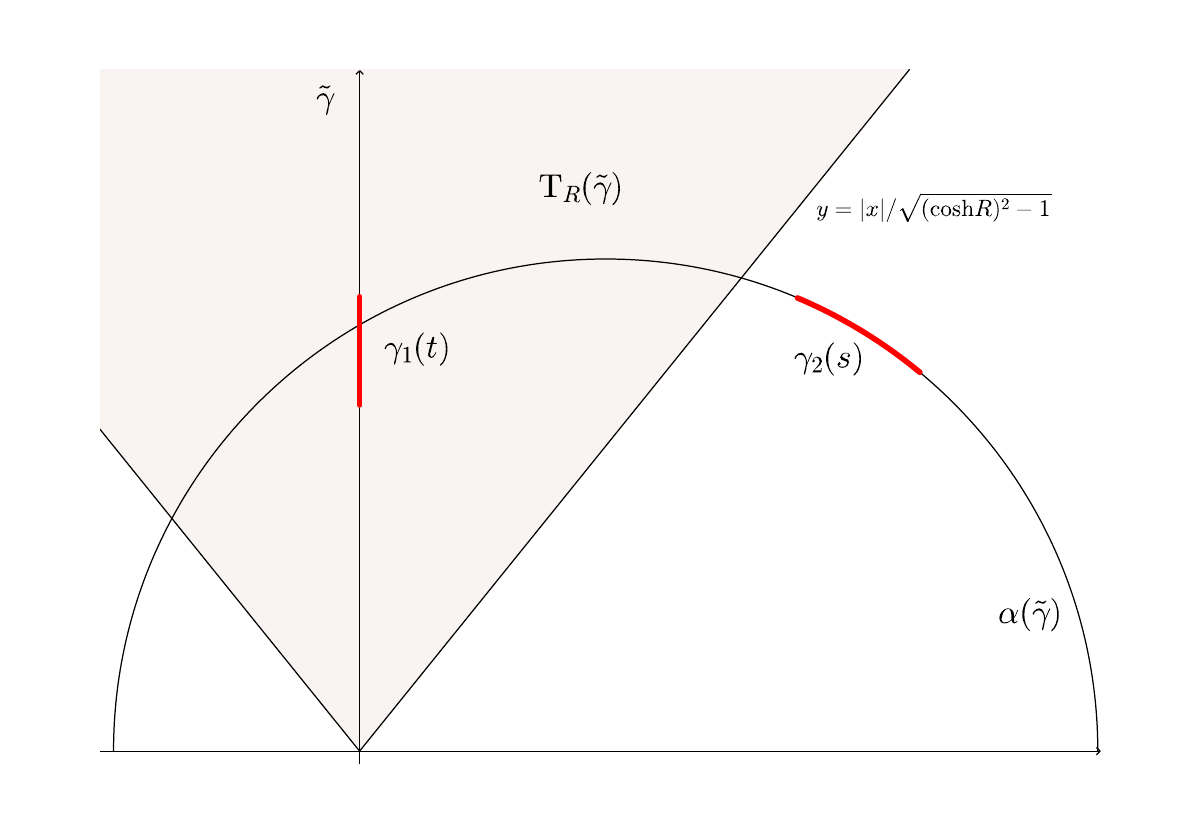}
\caption{$t_0\in[0,1]$}
  \label{3}
\end{figure}

We claim that
\begin{equation}\alpha  \notin {\Gamma _{{{\rm T}_R}(\tilde \gamma )}} \Rightarrow r\le C{\rm cosh}T, \end{equation}
where $C$ is independent of $T$.

To see this, we need to find some sufficient conditions to ensure that $\alpha  \in {\Gamma _{{{\rm T}_R}(\tilde \gamma )}}$. We note that (\ref{condit1})-(\ref{contra}) still hold, as $a/r <1$. Moreover, since $t_0\in [0,1]$, we have
\begin{equation}\label{t0}
e^{2t_0}=(r+a)(r-a)\in [1,e^2],\end{equation}
which implies that the two conditions in (\ref{claim32}), $ r\le C{\rm cosh}T$ and $|r-a|\ge \frac1{C{\rm cosh}T}$, are equivalent. Hence our claim is valid.

By (\ref{condit1}) and (\ref{condit2}), $\alpha  \notin {\Gamma _{{{\rm T}_R}(\tilde \gamma )}}$ implies $e^s> u_+$ or $e^s<u_-$.

If $e^s> u_+$, direct calculations yield
\[(r-a)e^{2s}-(r+a)\ge \frac{2(({\rm cosh}R)^2-a/r)(r-a)}{(1-a/r)^2}-2r\ge 2(({\rm cosh}R)^2-1)r.\]

If $e^s<u_-$, similarly we have
\[(r+a)-(r-a)e^{2s}\ge 2r-\frac{({\rm cosh}R)^2(r-a)}{({\rm cosh}R)^2-1}\ge \frac{({\rm cosh}R)^2-2}{({\rm cosh}R)^2-1}r.\]

Hence for some constant $C$ independent of $T$, we always have
\begin{equation}\label{middleterm}|a+r+(a-r)e^{2s}|\ge Cr.\end{equation}
Note that $1\le r\le C{\rm cosh}T$ and $|r-a|\le e$. We get
\[A\le Cr^2({\rm cosh}T)^2.\]
Thus by (\ref{phitimes})
\[\left|\frac{\phi_{st}^{''}}{t-t_0}\right|\ge \frac{Cr}{({\rm cosh}T)^2 r(r^2({\rm cosh}T)^2)}\left|\frac{e^{2t}-e^{2t_0}}{t-t_0}\right|\ge Ce^{-6T}.\]

Now we prove the upper bounds. Since $|r-a|\ge \frac1{C{\rm cosh}T}$, (\ref{estimateA}) still holds in this case. Thus by (\ref{phist}), (\ref{phistt}) and (\ref{phisttt}), we see that
\[|\phi_{st}^{''}|\le \frac{Cr^3({\rm cosh}T)^2(r^2({\rm cosh}T)^2)(r)}{(({\rm cosh}T)^{-6}r^4)^{3/2}}\le Ce^{13T},\]

\[|\phi_{stt}^{'''}|\le \frac{Cr^3({\rm cosh}T)^2(r^2({\rm cosh}T)^2)(r^5({\rm cosh}T)^4)}{(({\rm cosh}T)^{-6}r^4)^{5/2}}\le Ce^{23T},\]

and
\[|\phi_{sttt}^{''''}|\le \frac{Cr^3({\rm cosh}T)^2(r^2({\rm cosh}T)^2)(r^9({\rm cosh}T)^8)}{(({\rm cosh}T)^{-6}r^4)^{7/2}}\le Ce^{33T}.\]

(II) Assume $t_0\notin[-1,2]$. See Figure 4.

\begin{figure}
  \centering
    \includegraphics[height=8cm]{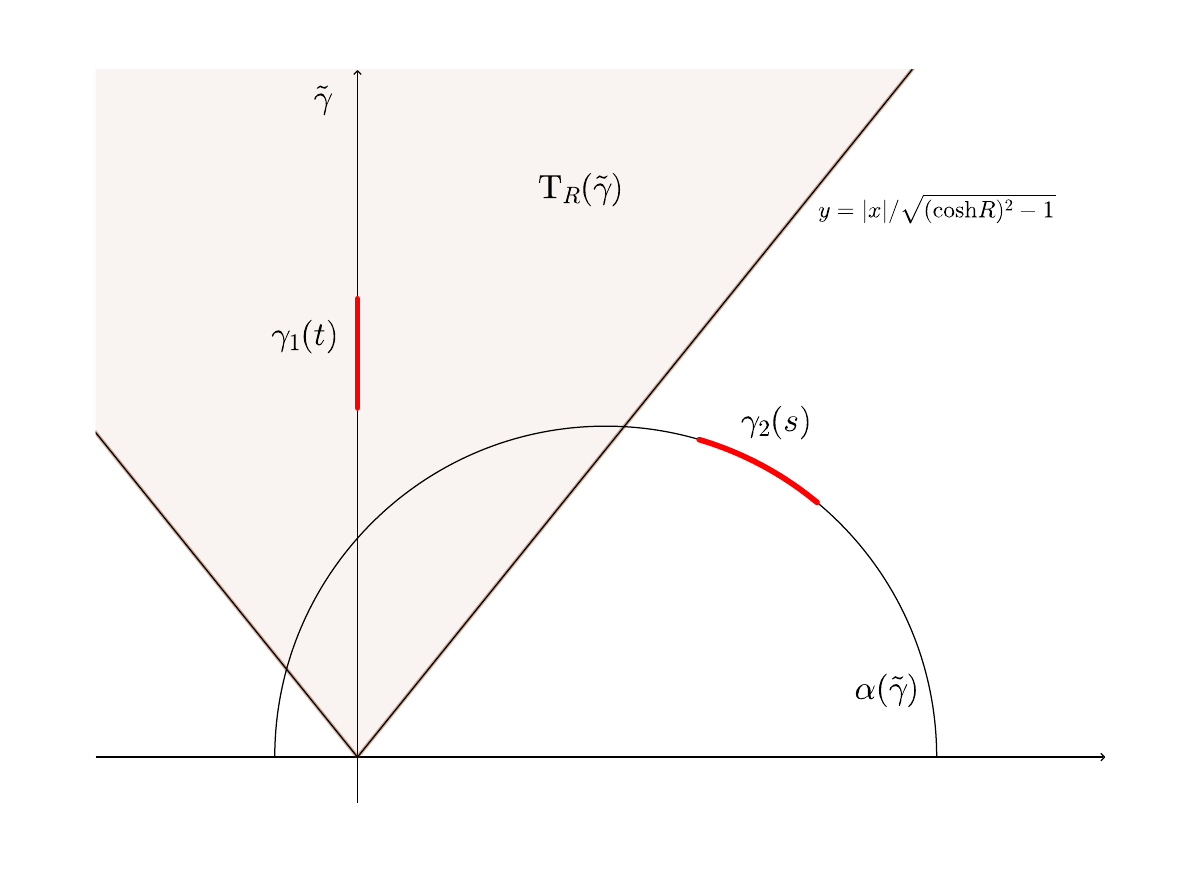}
\caption{$t_0\not\in[-1,2]$}
  \label{4}
\end{figure}

Note we also have the claim (\ref{claim32}), as $a/r<1$. By (\ref{a-rbound}), we have
\[A\le Cr^2({\rm cosh}T)^4.\]

If $r\le C({\rm cosh}T)^4$, by (\ref{phitimes}), (\ref{middleterm}) and $t_0\notin[-1,2]$, we have
\[|\phi_{st}^{''}|\ge \frac{Cr}{({\rm cosh}T)^2r(r^2({\rm cosh}T)^4)}\ge Ce^{-14T}.\]

If $|r-a|\ge \frac1{C{\rm cosh}T}$ and $r\ge ({\rm cosh}T)^4$, then
\[(r-a)e^{2s}-(a+r)\ge Cr^2({\rm cosh}T)^{-3}-2r\ge Cr^2({\rm cosh}T)^{-3},\]and
\[r^2-a^2-e^{2t}\ge Cr({\rm cosh}T)^{-1}-e^2\ge Cr({\rm cosh}T)^{-1}.\]

Thus by (\ref{phitimes}) we get
\[|\phi_{st}^{''}|\ge \frac{C(r^2({\rm cosh}T)^{-3})(r({\rm cosh}T)^{-1})}{({\rm cosh}T)^2r(r^2({\rm cosh}T)^4)}\ge Ce^{-10T}.\]

It remains to prove the upper bounds. Since $t_0\notin[-1,2]$ and (\ref{a-rbound}), we have
\[A-4re^{s+t}\ge \frac{(e^{2t}+a^2-r^2)^2}{e^{2t}+(a-r)^2}\ge C({\rm cosh}T)^{-2}.\]
If $r\le C{\rm cosh}T$, then
\[|\phi_{st}^{''}|\le \frac{C({\rm cosh}T)({\rm cosh}T)^4({\rm cosh}T)^5({\rm cosh}T)^2}{(({\rm cosh}T)^{-2})^{3/2}}\le Ce^{15T},\]
\[|\phi_{stt}^{'''}|\le \frac{C({\rm cosh}T)({\rm cosh}T)^4({\rm cosh}T)^5({\rm cosh}T)^{14}}{(({\rm cosh}T)^{-2})^{5/2}}\le Ce^{29T},\]
and
\[|\phi_{sttt}^{''''}|\le \frac{C({\rm cosh}T)({\rm cosh}T)^4({\rm cosh}T)^5({\rm cosh}T)^{26}}{(({\rm cosh}T)^{-2})^{7/2}}\le Ce^{43T}.\]

If $|r-a|\ge \frac1{C{\rm cosh}T}$ and $r\ge {\rm cosh}T$ then we also have (\ref{estimateA}).
Hence the same estimates hold:
\[|\phi_{st}^{''}|\le \frac{Cr^3({\rm cosh}T)^2(r^2({\rm cosh}T)^3)(r{\rm cosh}T)}{(({\rm cosh}T)^{-6}r^4)^{3/2}}\le Ce^{15T},\]
\[|\phi_{stt}^{'''}|\le \frac{Cr^3({\rm cosh}T)^2(r^2({\rm cosh}T)^3)(r^5({\rm cosh}T)^9)}{(({\rm cosh}T)^{-6}r^4)^{5/2}}\le Ce^{29T},\]
and
\[|\phi_{sttt}^{''''}|\le \frac{Cr^3({\rm cosh}T)^2(r^2({\rm cosh}T)^3)(r^9({\rm cosh}T)^{17})}{(({\rm cosh}T)^{-6}r^4)^{7/2}}\le Ce^{43T},\]
which completes our proof.
\end{proof}
\begin {remark} \label{remark2}{\rm As pointed out in \cite{top}, the various upper bounds $|D^\alpha \phi|\le C_\alpha e^{CT}$ also follow from Proposition 3 and Lemma 4 in \cite{berard}. We are including the proofs for these upper bounds in our case just for the sake of completeness.}
\end{remark}

\section{Acknowledgement}
We would like to thank Professor C. Sogge for his guidance and patient discussions during this study. This paper would not have been possible without his generous support. It's our pleasure to thank our colleagues C. Antelope, D. Ginsberg, and X. Wang  for going through an early draft of this paper.

\bibliography{ER}

\begin{thebibliography}{10}

\bibitem{seeger}
J.~G. Bak and A.~Seeger.
\newblock Extensions of the {S}tein-{T}omas theorem.
\newblock {\em Math. Res. Lett.}, 18(4):767--781, 2011.

\bibitem{berard}
P.~H. B\'erard.
\newblock On the wave equation on a compact {R}iemannian manifold without
  conjugate points.
\newblock {\em Math. Z.}, 155(3):249--276, 1977.

\bibitem{top}
M.~D. Blair and C.~D. Sogge.
\newblock Concerning {T}oponogov's {T}heorem and logarithmic improvement of
  estimates of eigenfunctions.
\newblock Preprint.

\bibitem{bourgain}
J.~Bourgain.
\newblock Besicovitch type maximal operators and applications to {F}ourier
  analysis.
\newblock {\em Geom. Funct. Anal.}, 2:145--187, 1991.

\bibitem{burq}
N.~Burq, P.~G\'erard, and N.~Tzvetkov.
\newblock Restriction of the {L}aplace-{B}eltrami eigenfunctions to
  submanifolds.
\newblock {\em Duke Math. J.}, 138:445--486, 2007.

\bibitem{chen}
X.~Chen.
\newblock An improvement on eigenfunction restriction estimates for compact
  boundaryless {R}iemannian manifolds with nonpositive sectional curvature.
\newblock {\em Trans. Amer. Math. Soc.}, 367:4019--4039, 2015.

\bibitem{chensogge}
X.~Chen and C.~D. Sogge.
\newblock A few endpoint geodesic restriction estimate for eigenfunctions.
\newblock {\em Comm. Math. Phys.}, 329(3):435--459, 2014.

\bibitem{hu}
R.~Hu.
\newblock ${L}^p$ norm estimates of eigenfunctions restricted to submanifolds.
\newblock {\em Forum Math.}, 6:1021--1052, 2009.

\bibitem{Slp}
C.~D. Sogge.
\newblock Concerning the ${L}^p$ norm of spectral cluster of second-order
  elliptic operators on compact manifolds.
\newblock {\em J. Funct. Anal}, 77:123--138, 1988.

\bibitem{fio}
C.~D. Sogge.
\newblock {\em Fourier integrals in classical analysis}, volume 105 of {\em
  Cambridge Tracts in Mathematics}.
\newblock Cambridge University Press, Cambridge, 1993.

\bibitem{hangzhou}
C.~D. Sogge.
\newblock {\em Hangzhou lectures on eigenfunctions of the {L}aplacian}, volume
  188 of {\em Annals of Mathematics Studies}.
\newblock Princeton University Press, Princeton, NJ, 2014.

\bibitem{loglog}
C.~D. Sogge.
\newblock Improved critical eigenfunction estimates on manifolds of nonpositive
  curvature.
\newblock Preprint.

\bibitem{survey}
C.~D. Sogge.
\newblock Problems related to the concentration of eigenfunctions.
\newblock Preprint.

\bibitem{sz}
C.~D. Sogge and S.~Zelditch.
\newblock On eigenfunction restriction estimates and ${L}^4$-bounds for compact
  surfaces with nonpositive curvature.
\newblock In {\em Advances in {A}nlysis: {T}he {L}egacy of {E}lias {M}.
  {S}tein}, Priceton Mathematical Series, pages 447--461. Princeton University
  Press, 2014.

\end{thebibliography}

\bibliographystyle{plain}

\end{document}